\documentclass[12pt]{article}
\RequirePackage[OT1]{fontenc}
\RequirePackage{amsthm,amsmath}
\RequirePackage[numbers]{natbib}
\RequirePackage[colorlinks,citecolor=blue,urlcolor=blue]{hyperref}
\usepackage{amsfonts}
\usepackage{color}
\usepackage{pifont}
\usepackage{amssymb}
\setlength{\parindent}{0mm}
\usepackage[english]{babel}
\usepackage[T1]{fontenc}
\usepackage{graphicx}
\usepackage{subfigure}
\usepackage{latexsym}
\usepackage{amstext}
\usepackage{mathrsfs}
\usepackage{hyperref}
\usepackage{dsfont}
\usepackage{graphics}
\usepackage{enumerate}
\usepackage{paralist}
\usepackage{color}
\usepackage{fullpage}
\usepackage{wrapfig}

\newtheorem{prop}{Proposition}[section]

\newtheorem{lem}[prop]{Lemma}
\newtheorem{theo}[prop]{Theorem}

\numberwithin{equation}{section}

\newcommand{\beq}{\begin{eqnarray}}
\newcommand{\beqq}{\begin{eqnarray*}}
\newcommand{\eeq}{\end{eqnarray}}
\newcommand{\eeqq}{\end{eqnarray*}}

%



\usepackage{mathtools}
\usepackage[english]{babel}
\usepackage[utf8]{inputenc}
\usepackage{amsmath}
\usepackage{graphicx}
\usepackage[colorinlistoftodos]{todonotes}
\usepackage{polynom}
\usepackage{amsfonts}
\usepackage{dsfont}
\usepackage{amsthm}
\usepackage{hyperref}
\usepackage{graphicx}
\setlength{\oddsidemargin}{0pt}
\setlength{\textwidth}{470pt}
\setlength{\marginparsep}{0pt} \setlength{\marginparwidth}{60pt}
\setlength{\topmargin}{20pt} \setlength{\headheight}{0pt}
\setlength{\headsep}{0pt} \setlength{\textheight}{640pt}
\setlength{\footskip}{20pt}
\newtheorem{theorem}{Theorem}[section]

\newtheorem{lemma}{Lemma}[section]

\usepackage{fullpage}
\usepackage[T1]{fontenc}
\usepackage{hyperref}
\usepackage{xcolor}
\definecolor{link-color}{rgb}{0.15,0.4,0.15}
\hypersetup{
  colorlinks,
  linkcolor = {link-color}, citecolor = {link-color},
}

\usepackage{graphicx}
\usepackage{hyperref}
\usepackage{amsmath,amsthm,amssymb}
\usepackage{bbm}
\usepackage{tabularx}

\makeatletter
\def\namedlabel#1#2{\begingroup
    #2%
    \def\@currentlabel{#2}%
    \phantomsection\label{#1}\endgroup
}
\makeatother




\renewcommand{\textrm}[1]{\textup{#1}}%




    \def\d{{\textnormal d}}




\newenvironment{eqnarr}{\begin{IEEEeqnarray}{rCl}}{\end{IEEEeqnarray}\ignorespacesafterend}

\renewcommand{\eqref}[1]{\hyperref[#1]{(\ref*{#1})}}












\newcommand*{\norm}[1]{\lVert #1 \rVert}





    \def\beq{\begin{eqnarr}}
    \def\eeq{\end{eqnarr}}
    \def\beqq{\begin{eqnarray*}} 
    \def\eeqq{\end{eqnarray*}} 

        \def\d{{\rm d}}

    \def\d{{\textnormal d}}


\newcommand*{\pref}[1]{\hyperref[#1]{(\ref*{#1})}}
\newcommand*{\refpref}[2]{\hyperref[#2]{\ref*{#1}(\ref*{#2})}}



%
  \newcommand{\D}{{\rm d}}



\newcommand{\Q}{\texttt Q}

\newcommand{\eU}{\emph{\texttt U}}

\newcommand{\V}{\texttt V}

\newcommand{\ebS}{\emph{\texttt S}}

\newcommand{\ebF}{\emph{\texttt F}}

\newcommand{\bA}{\mathcal A}

\title{Stochastic Methods for Neutron Transport  Equation III: Generational many-to-one  and  
	$
	k_{\texttt{eff} } 
	$}
\author{A. M. G. Cox\footnote{
Department of Mathematical Sciences, University of Bath, Claverton Down, Bath, BA2 7AY, UK. Email: \texttt{a.g.m.cox@bath.ac.uk, e.l.horton@bath.ac.uk, a.kyprianou@bath.ac.uk}
} 
, \ E. L. Horton$^*$, A. E. Kyprianou$^*$,
\ D. Villemonais\footnote{Institut \'Elie Cartan de Lorraine,
Bureau 123,
Universit\'e de Lorraine,
54506,
Vandoeuvre-l\`es-Nancy Cedex,
France. Email: \texttt{denisvillemonais@gmail.com}
}
}

\date{\today}


\begin{document}
\maketitle

\begin{abstract}\hspace{0.1cm}
The Neutron Transport Equation (NTE) describes the flux of neutrons over time through an inhomogeneous fissile medium. In the recent articles~\cite{MultiNTE, SNTE},  a probabilistic solution of the NTE is considered in order to demonstrate a Perron-Frobenius type growth of the solution via its projection onto an associated leading eigenfunction. In \cite{SNTE-II, MCNTE}, further analysis is performed to understand the implications of this growth both in the stochastic sense, as well as from the perspective of Monte-Carlo simulation.

Such Monte-Carlo simulations are prevalent in industrial applications, in particular where regulatory checks are needed in the process of reactor core design. 
In that setting, however, it turns out that a different notion of growth takes centre stage, which is otherwise characterised by another eigenvalue problem. In that setting, the eigenvalue, sometimes called $k$-effective (written $k_{\texttt{eff}}$), has the physical interpretation as being the ratio of neutrons produced (during fission events) to the number lost (due to absorption in the reactor or leakage at the boundary) per typical fission event.

In this article, we aim to supplement \cite{MultiNTE, SNTE, SNTE-II, MCNTE}, by developing the stochastic analysis of the NTE further to the setting where a rigorous probabilistic interpretation of $k_{\texttt{eff}}$ is given, both in terms of a Perron-Frobenius type analysis as well as via classical operator analysis.

{\color{black}To our knowledge, despite the fact that an extensive engineering literature and industrial Monte-Carlo software is concentrated around the estimation of $k_{\texttt{eff}}$ and its associated eigenfunction, we believe that our work is the first rigorous treatment in the probabilistic sense (which underpins some of the aforesaid Monte-Carlo simulations).   }

\smallskip

\noindent {\it Key words:} Neutron Transport Equation, principal eigenvalue, semigroup theory, Perron-Frobenius decomposition
\smallskip

\noindent{\it  MSC:} 82D75, 60J80, 60J75, 60J99

\end{abstract}

\section{Introduction}\label{intro}
%

As described in~\cite{SNTE, SNTE-II, MultiNTE} the NTE is a balance equation for the flux of neutrons across a planar cross-section in an inhomogeneous fissile medium. The backwards form of the equation can be written as follows,
\begin{align}
\frac{\partial}{\partial t}\psi_t(r, \upsilon) &=\upsilon\cdot\nabla\psi_t(r, \upsilon)  -\sigma(r, \upsilon)\psi_t(r, \upsilon)\notag\\
&+ \sigma_{\texttt{s}}(r, \upsilon)\int_{V}\psi_t(r, \upsilon') \pi_{\texttt{s}}(r, \upsilon, \upsilon')\d\upsilon' + \sigma_{\texttt{f}}(r, \upsilon) \int_{V}\psi_t(r, \upsilon') \pi_{\texttt{f}}(r, \upsilon, \upsilon')\d\upsilon',
\label{bNTE}
\end{align}
where the flux $\psi_t(r, \upsilon)$ is a function of time, $t$ and the configuration variables $ (r, \upsilon) \in  D \times V$ where $D\subseteq\mathbb{R}^3$ is a non-empty, smooth, bounded convex domain such that $\partial D$ has zero Lebesgue measure, and $V = \{\upsilon\in \mathbb{R}^3:\upsilon_{\texttt{min}}\leq  |\upsilon|\leq \upsilon_{\texttt{max}}\}
$. Furthermore, the other components of~\eqref{bNTE} have the following interpretation:
\begin{align*}
\sigma_{\texttt{s}}(r, \upsilon) &: \text{ the rate at which scattering occurs from incoming velocity $\upsilon$,}\\
\sigma_{\texttt{f}}(r, \upsilon) &: \text{  the rate at which fission occurs from incoming velocity $\upsilon$,}\\
\sigma(r, \upsilon) &: \text{ the sum of the rates } \sigma_{\texttt{f}}+ \sigma_{\texttt{s}} \text{ and is known as the total cross section,}\\
\pi_{\texttt{s}}(r, \upsilon, \upsilon')\d\upsilon' &: \text{  the scattering yield at velocity $\upsilon'$ from incoming velocity }  \upsilon, \\
 &\hspace{0.5cm}\text{ satisfying }\textstyle{\int_V}\pi_{\texttt{s}}(r, \upsilon, \upsilon'){\rm d}\upsilon'=1,\text{ and }\\
 \pi_{\texttt{f}}(r, \upsilon, \upsilon')\d\upsilon' &:  \text{  the neutron yield at velocity $\upsilon'$ from fission with incoming velocity }   \upsilon,\\
 &\hspace{0.5cm}\text{ satisfying }{\color{black} \textstyle{\int_V}\pi_{\texttt{f}}(r, \upsilon, \upsilon')\d\upsilon' <\infty.}
 \end{align*}
We also enforce the following initial and boundary conditions
\begin{equation}
\left\{
\begin{array}{ll}
\psi_0(r, \upsilon) = g(r, \upsilon) &\text{ for }r\in D, \upsilon\in{V},
\\
&
\\
\psi_t(r, \upsilon) = 0& \text{ for } t \ge 0 \text{ and } r\in \partial D
\text{ if }\upsilon
\cdot{\bf n}_r>0,
\end{array}
\right.
\label{BC}
\end{equation}

where ${\bf n}_r$ is the outward unit normal at $r \in \partial D$ and $g: D \times V \to [0, \infty)$ is a bounded, measurable function. 
Throughout we will rely on the following assumptions in some (but not all) of our results:

{\bf 
{\color{black} 

\begin{itemize}
\item[(H1):]  Cross-sections $\sigma_{\texttt{s}}$, $\sigma_{\texttt{f}}$, $\pi_{\texttt{s}}$ and $\pi_{\texttt{f}} $ are uniformly bounded away from   infinity.

\item[(H2):] 
We have 
$
\sigma_{\texttt{s}} \pi_{\texttt{s}}  + 
\sigma_{\texttt{f}} \pi_{\texttt{f}}>0$ on $D\times V\times V$.

\item[(H3):]  There is an open ball $B$ compactly embedded in $D$ such that $\sigma_{\texttt{f}}\pi_{\texttt{f}} >0$ on $B\times V\times V$.

\item[(H4):]  the fission offspring are bounded in number  by the constant $N_{\texttt{max}}> 1$.
\end{itemize}
}
}

Note, the assumption (H1) ensures that all activity occurs at a maximum rate. Assumption (H2) ensures that at least some activity occurs, whether it be scattering or fission, together with (H3), it ensures that there is at least some fission as well as scattering. Finally (H4) is a physical constraint that is natural to nuclear fission, typically no more than 3 neutrons are produced during an average fission event. Figure \ref{fig} illustrates the complex nature of the in homogeneity in the domain one typically considers.
\bigskip

\begin{wrapfigure}{r}{0.5\textwidth}
\label{fig}
  \begin{center}
\includegraphics[height=8cm]{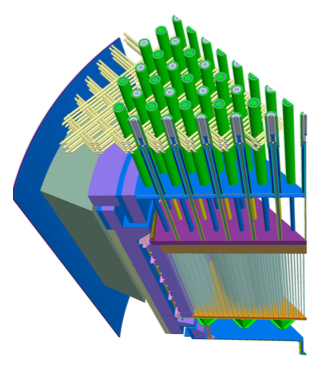}
  \end{center}
\caption{\it The geometry of a nuclear reactor core representing a physical domain $D$, on to which the different cross-sectional values of  $ \sigma_{\emph{\texttt{s}}},  \sigma_{\emph{\texttt{f}}},  \pi_{\emph{\texttt{s}}},  \pi_{\emph{\texttt{f}}}$ are mapped, also as a function of neutron velocity.}
\end{wrapfigure}

Due to the irregular nature of gradient operator, \eqref{bNTE} is meaningless in the pointwise sense, so it is often stated in one of two forms. The first is to treat \eqref{bNTE} as a weak linear partial integro-differential equation (PIDE) in an appropriate Banach  space, usually $L_2(D\times V)$, the space of functions  $f: D\times V\mapsto [0,\infty)$ which are finite with respect to the norm 
$
\norm{f}_2 = (\int_{D\times V}f(r,\upsilon)\d r\d\upsilon)^{1/2}
$); see e.g. \cite{D, DL6, M-K}. The second is to consider   the integral or {\it mild} form of \eqref{bNTE}. We refer the reader to~\cite{SNTE, SNTE-II, MultiNTE} and the references therein for a discussion on the various formulations of the NTE and its solution. We will also elaborate on both in  the forthcoming discussion.

\bigskip

For both formats of \eqref{bNTE}, the papers \cite{D, DL6, M-K, SNTE, MultiNTE} dealt with the time-eigenvalue problem and an associated Perron-Frobenius decomposition. More precisely, they give a rigorous stochastic meaning to the asymptotic 
\begin{equation}
\psi_t\sim {\rm e}^{\lambda_*t} c_g\varphi + o({\rm e}^{\lambda_*t}),
\label{psitilde}
\end{equation}
as $t\to\infty$, where $\lambda_*$ and $\varphi$ are the leading eigenvalue and associated eigenfunction associated to the NTE in the appropriate sense and $c_g$ is a constant that depends on the initial data $g$. 

\bigskip 

Such an understanding is important as it promotes a number of different Monte-Carlo algorithms that can be used to estimate both the lead eigenvalue $\lambda_*$ and the associated non-negative eigenfunction $\varphi$. The latter can be formulated as an eigenpair in $L_2(D\times V)$ satisfying
\begin{equation}
({\mathcal T} + \mathcal{S} +{\mathcal F})\varphi= \lambda_* \varphi,
\label{lambda}
\end{equation}
on $D\times V$, where 
 
\begin{equation}
\left\{
\begin{array}{rl}
{\mathcal T}{f}(r, \upsilon)  &:=  \upsilon\cdot\nabla{f}(r, \upsilon) - \sigma(r, \upsilon)\\
\\
{\mathcal S}{f}(r, \upsilon) &:= \sigma_{\texttt{s}}(r, \upsilon)\int_{V}{f}(r, \upsilon') \pi_{\texttt{s}}(r, \upsilon, \upsilon') \d\upsilon' \\
\\
{\mathcal F}{f}(r, \upsilon) &: =  \sigma_{\texttt{f}}(r, \upsilon) \int_{V}{f}(r, \upsilon') \pi_{\texttt{f}}(r, \upsilon, \upsilon')\d\upsilon',
\end{array}
\right.
\end{equation}
\smallskip
Here, we can think of $\lambda_*$ as characterising the rate of growth of flux in the system over time.
\bigskip

It turns out that, predominantly in industrial, engineering and  (some) physics literature, there is another eigenvalue problem that plays a fundamental role in the design and safety of nuclear reactors; see for example  Section 1.5 of \cite{LM}. The aforesaid eigenvalue problem involves finding (in any appropriate sense) an eigenpair  $k$ and $\phi$ such that 
\begin{equation}
({\mathcal T} + \mathcal{S})\phi + \frac{1}{k}{\mathcal F}\phi =0.
\label{keval2}
\end{equation}

%

The leading eigenvalue, which in the nuclear regulation industry is  called $k$-effective, written  $k_{\texttt{eff}}$,  has the physical interpretation as being the ratio of neutrons produced (during fission events) to the number lost (due to absorption in the reactor or leakage at the boundary). Another interpretation of $k$ is that it represents the average number of neutrons produced per fission event. It is this second interpretation which we exploit, since  $k_{\texttt{eff}}$ acts as a measure of neutrons produced between fission generations.

\bigskip

It is worth noting that the two eigenproblems offer potentially different sets of solutions,  however, they agree in terms of criticality. More precisely, in \eqref{lambda}, the triple $(\mathcal{T}, \mathcal{S}, \mathcal{F})$ is called critical if the leading eigenvalue, $\lambda_*$, in \eqref{lambda} is zero, and otherwise called subcritical (resp. supercritical) if $\lambda_*<0$ (resp. $\lambda_*>0$). In the setting of \eqref{keval2}, the triple $(\mathcal{T}, \mathcal{S}, \mathcal{F})$ is called critical if $k_{\texttt{eff}}=1$ and subcritical (resp. supercritical) if $k_{\texttt{eff}}<1$ (resp. $k_{\texttt{eff}}>1$). 

We note however that in \cite{fbrown}, there is a relationship between the two eigenvalues, regardless of the criticality of the system and at criticality, both \eqref{lambda} and \eqref{keval2} agree.

\bigskip

The main objective of this paper is to put into a rigorous setting the existence of the `leading' solutions to \eqref{keval2} in the two main contexts that the NTE \eqref{bNTE} is understood; that is, the weak linear PIDE context and the probabilistic context. Moreover, in the mild setting,  we will build an expectation semigroup, say  $(\Psi_n, n\geq 0)$, out of a stochastic process such that 
\[
\Psi_n[g]\sim k_{\texttt{eff}}^{-n} C_g\phi + o(k_{\texttt{eff}}^{-n}),
\]
for $g\in L^+_\infty(D\times V)$, as $n\to\infty$, and an appropriate choice of $C_g\geq 0$. (See Theorem \ref{7CVtheoremBis} below.)
This also provides a mathematically rigorous underpinning for many of the Monte-Carlo algorithms that are used in industry for computing $k_{\texttt{eff}}$. We will offer further discussion in this direction at the end of the paper.

\bigskip

The rest of this article is organised as follows. In the next section, we formally introduce the description of \eqref{bNTE} as a PIDE on a functional space, that is, we describe it as an abstract Cauchy problem. Moreover, we introduce two underlying stochastic processes, both of which can be used to describe the solution to the mild NTE. Also in this section, we introduce a second mild equation, \eqref{mildNTE}, whose eigen-solutions give us a sense in which we can characterise solutions to \eqref{keval2}.

\bigskip

In Section \ref{QSD}, we provide a solution to the newly introduced mild equation \eqref{mildNTE}. In addition, we state the main result of this paper (Theorem \ref{CVtheorem}) which shows the existence of a lead eigensolution to \eqref{mildNTE}.

\bigskip

In Section \ref{Sclassical}, for comparison, we show how to construct and give meaning to the lead eigensolution to \eqref{keval2}
in the setting of a functional space. In addition, we show how the two notions of the lead eigensolution, in this and the previous section, agree.

\bigskip 

In Section \ref{longproof}, we give the proof of the main result of Section \ref{QSD}. Finally, we conclude in Section \ref{discussion} with some discussion concerning the relevance of such results to previous work and Monte-Carlo methods.  


\section{Formulations of the NTE and associated eigenfunctions}
As alluded to in the introduction, there are two principal ways in which the NTE is formulated. In this section, we will elaborate on them in a little more mathematical detail for later convenience and context of our main results. 


\subsection{Abstract Cauchy Problem (ACP)} Following e.g. \cite{D, DL6, M-K}, we want to formulate \eqref{bNTE} in the function space $L_2(D\times V)$.
The so-called (initial-value) abstract Cauchy problem (ACP)
takes the form 
\begin{equation}
\dfrac{\partial{{u}}_t}{\partial t} = \bA{{u}}_t
\quad \text{ and }\quad
{{u}}_0 = g, 
\label{ACP}
\end{equation}
where $\bA = {\mathcal T} + {\mathcal S}+{\mathcal F}$ and  ${{u}}_t$ belongs to   the space $L_2({D}\times V)$, for $t\geq 0$ (in particular $g\in L_2({D}\times V)$).  Specifically,  $({{u}}_t, t\geq 0)$ is continuously differentiable in the space $L_2({D}\times V)$, meaning there exists a $\dot{{u}}_t\in  L_2({D}\times V)$, which is time-continuous in $ L_2({D}\times V)$ with respect to $\norm{\cdot}_2$ and such that  $\lim_{h\to 0}h^{-1}({u}_{t+h} - {u}_t)= \dot{{u}}_t$ for all $t\geq 0$. Necessarily, the solution to \eqref{ACP} forms a $c_0$-semigroup\footnote{Recall that 
a $c_0$-semigroup $(\texttt{V}_t, t\geq0)$ also goes by the name of a strongly continuous semigroup and, in the present context, this means it has  has the properties that (i) $\texttt{V}_0 = {\rm Id}$, (ii) $\texttt{V}_{t+s}[g] = \texttt{V}_t[\texttt{V}_s[g]]$, for all $s, t\geq 0$, $g\in  L_2({D}\times V)$ and (iii) for all $g\in  L_2({D}\times V)$, $\lim_{h\to0}\norm{\texttt{V}_h[g] - g}_2 = 0$.}. 
Moreover, 
$
{\rm Dom}(\bA): =\{g\in L_2(D\times V) : 
\upsilon\cdot\nabla g \in   L_2(D\times V)
\text{ and }g|_{\partial D^+} =0
\}
$
is the domain of $\bA$ and ${{u}}_t\in {\rm Dom}(\bA)$ for all $t\geq 0$. 

\bigskip

\begin{theo}\label{thACP} {\color{black} Suppose (H1) holds.}
For $g\in  L_2({D}\times V),$ the unique solution to \eqref{ACP} is given by  $(\texttt{\emph V}_t, t\geq 0)$, the  $c_0$-semigroup  generated by $(\bA, {\rm Dom}(\bA))$, i.e. the orbit $\texttt{\emph V}_t[g] := \exp(t \bA)g$. 
\end{theo}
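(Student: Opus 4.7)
The plan is to recognise \eqref{ACP} as a standard abstract Cauchy problem on $L_2(D \times V)$ and invoke the classical theory of $c_0$-semigroups. I would split $\bA = \mathcal{T} + (\mathcal{S} + \mathcal{F})$ into the streaming/absorption part and the (integral) collision part, show that $(\mathcal{T}, \mathrm{Dom}(\mathcal{T}))$ generates a contraction $c_0$-semigroup on $L_2(D\times V)$, establish under (H1) that $\mathcal{S} + \mathcal{F}$ is a bounded linear operator on $L_2(D\times V)$, and conclude via the bounded perturbation theorem that $\bA$ generates a $c_0$-semigroup $(\texttt{V}_t)_{t \geq 0}$ with $\mathrm{Dom}(\bA) = \mathrm{Dom}(\mathcal{T})$ as stated. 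The existence and uniqueness claim for \eqref{ACP} is then the standard generator--classical solution correspondence.

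For the streaming semigroup, I would construct $(\texttt{U}_t)_{t\geq 0}$ explicitly via the method of characteristics. Writing $\kappa^+_{r,\upsilon} := \inf\{s > 0 : r + s\upsilon \notin D\}$ for the forward exit time of $D$, define
\begin{equation*}
\texttt{U}_t[f](r, \upsilon) := f(r + t\upsilon, \upsilon) \exp\Bigl(-\int_0^t \sigma(r + s\upsilon, \upsilon) \d s\Bigr) \Indic{t < \kappa^+_{r, \upsilon}}.
\end{equation*}
The indicator automatically enforces the outflow boundary condition $g|_{\partial D^+} = 0$, and the semigroup property is immediate from the additivity of the characteristic flow. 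Since $\sigma \geq 0$, each $\texttt{U}_t$ is a contraction on $L_2(D \times V)$; strong continuity at $t = 0$ follows by dominated convergence after first verifying it on a dense subclass of smooth, compactly supported test functions. Differentiation of the representation along characteristic lines then identifies the infinitesimal generator as $(\mathcal{T}, \mathrm{Dom}(\mathcal{T}))$, the regularity condition $\upsilon\cdot\nabla g \in L_2(D\times V)$ and the vanishing boundary trace arising naturally from the two factors in the representation. This identification of the generator together with its precise domain is the main technical obstacle; I would lean on the classical treatments in \cite{DL6, M-K} rather than reproving it in detail.

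For the collision part, write $K(r, \upsilon, \upsilon') := \sigma_{\texttt{s}}(r, \upsilon) \pi_{\texttt{s}}(r, \upsilon, \upsilon') + \sigma_{\texttt{f}}(r, \upsilon) \pi_{\texttt{f}}(r, \upsilon, \upsilon')$, so that
\begin{equation*}
(\mathcal{S} + \mathcal{F})[f](r, \upsilon) = \int_V K(r, \upsilon, \upsilon') f(r, \upsilon') \d\upsilon'.
\end{equation*}
Under (H1), $K$ is uniformly bounded and $V$ has finite Lebesgue measure, so Cauchy--Schwarz produces a constant $C < \infty$ with $\norm{(\mathcal{S} + \mathcal{F})[f]}_2 \leq C \norm{f}_2$ for every $f \in L_2(D \times V)$. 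Bounded perturbation of a $c_0$-semigroup generator is again a generator with unchanged domain, whence $\bA = \mathcal{T} + \mathcal{S} + \mathcal{F}$ generates a $c_0$-semigroup $(\texttt{V}_t)_{t \geq 0}$ on $L_2(D \times V)$ with $\mathrm{Dom}(\bA) = \mathrm{Dom}(\mathcal{T})$, matching the domain in the statement. The standard generator--solution correspondence then gives that, for $g \in \mathrm{Dom}(\bA)$, the orbit $u_t := \texttt{V}_t[g] = \exp(t\bA)g$ lies in $\mathrm{Dom}(\bA)$ for all $t \geq 0$, satisfies $\dot{u}_t = \bA u_t$ in the sense required by \eqref{ACP}, and is unique (via the usual Gr\"onwall/contraction argument applied to the difference of two putative solutions); the extension to general $g \in L_2(D \times V)$ is by density, interpreting \eqref{ACP} in the corresponding mild sense.
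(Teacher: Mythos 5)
Your proposal is correct and follows essentially the same route the paper relies on: the paper states Theorem \ref{thACP} without proof, deferring to \cite{D, DL6, M-K}, and the argument in those references is exactly your decomposition into the explicit streaming/absorption semigroup along characteristics (with the exit-time indicator enforcing $g|_{\partial D^+}=0$) plus the bounded perturbation $\mathcal{S}+\mathcal{F}$, which is bounded on $L_2(D\times V)$ under (H1) since the kernels are bounded and $V$ has finite measure. Your closing remark that for general $g\in L_2(D\times V)$ outside $\mathrm{Dom}(\bA)$ the orbit solves \eqref{ACP} only in the mild sense is a fair (and slightly more careful) reading of the statement.
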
 

In the ACP setting, the notion of an eigenpair $(\lambda, \varphi)$
is well formulated on $L_2(D\times V)$ via \eqref{lambda}.
Equivalently,
it means we are looking for $\varphi\in L^+_2(D\times V)$ and $\lambda $ such that $\V_t[\varphi] = {\rm e}^{\lambda t}\varphi$ on $L^+_2(D\times V)$, for all $t\geq 0$. The sense in which we mean that $\lambda$ is a `leading' eigenvalue roughly boils down it corresponding to the eigenvalue in the spectrum of the operator $\mathcal{A}$ on $L_2(D\times V)$ with the largest real part (and, as usual, it is real valued itself), and moreover, its associated eigenfunction $\varphi$ is non-negative. As such, one expects the existence of a non-negative left eigenfunction $\tilde \varphi$ (e.g. in the sense that $\langle\tilde\varphi, \V_t[g]\rangle = {\rm e}^{\lambda t} \langle \tilde\varphi, g\rangle$ for $t\geq 0$) such that 
\begin{equation}
\norm{{\rm e}^{-\lambda t}\V_t[g] -  \langle\tilde\varphi, g\rangle\varphi}_2 = o({\rm e}^{-\lambda t}),
\label{Vtilde}
\end{equation}
as $t\to\infty$. Here, we use the notation $\langle f, g\rangle = \int_{D\times V} f(r,\upsilon)g(r,\upsilon)\d r\d\upsilon$, so that $\norm{\cdot}_2 = \langle\cdot, \cdot\rangle^{1/2}$.
Precise results of this nature can be found in \cite{DL6, M-K, MultiNTE}.

\subsection{Neutron branching process (NBP) and the mild NTE}\label{psi1}
We recall the neutron branching process (NBP) defined in~\cite{SNTE}, which at time $t\ge 0$ is represented by a configuration of particles which are specified via their physical location and velocity in $D\times V$, say $\{(r_i(t), \upsilon_i(t)): i = 1,\dots , N_t\}$, where $N_t$ is the number of particles alive at time $t\ge 0$. The NBP is then given by the empirical distribution of these configurations,
\begin{equation}
X_t(A) = \sum_{i=1}^{N_t}\delta_{(r_i(t), \upsilon_i(t))}(A), \qquad A\in\mathcal{B}(D\times V), \;t\ge 0,
\label{Xt}
\end{equation}
where $\delta$ is the Dirac measure, defined on $\mathcal{B}(D\times V)$, the Borel subsets of $D\times V$.
\bigskip

The evolution of $(X_t, t\geq 0)$ is a stochastic process valued in the space of atomic measures
$
\mathcal{M}(D\times V): = \{\textstyle{\sum_{i = 1}^n}\delta_{(r_i,\upsilon_i)}: n\in \mathbb{N}, (r_i,\upsilon_i)\in D\times V, i = 1,\cdots, n\}
$
which evolves randomly as follows.

\bigskip

A particle positioned at $r$ with velocity $\upsilon$ will continue to move along the trajectory $r + \upsilon t$, until one of the following things happens. 
\begin{enumerate}
\item[(i)] The particle leaves the physical domain $D$, in which case it is instantaneously killed. 
\item[(ii)] Independently of all other neutrons, a scattering event occurs when a neutron comes in close proximity to an atomic nucleus and, accordingly, makes an instantaneous change of velocity. For a neutron in the system with position and velocity $(r,\upsilon)$, if we write $T_{\texttt{s}}$ for the random time that scattering may occur, then independently of any other physical event that may affect the neutron, 
$
\Pr(T_{\texttt{s}}>t) = \exp\{-\textstyle{\int_0^t} \sigma_{\texttt{s}}(r+\upsilon s, \upsilon){\rm d}s \}, $ for $t\geq0.$

When scattering occurs at space-velocity $(r,\upsilon)$, the new velocity $\upsilon'\in V$ is selected independently with probability $\pi_{\texttt{s}}(r, \upsilon, \upsilon')\d\upsilon'$. 
\item[(iii)] Independently of all other neutrons, a fission event occurs when a neutron smashes into an atomic nucleus. 
For a neutron in the system  with initial position and velocity $(r,\upsilon)$, if we write $T_{\texttt{f}}$ for the random time that scattering may occur, then, independently of any other physical event that may affect the neutron, 
$
\Pr(T_{\texttt{f}}>t) = \exp\{-\textstyle{\int_0^t} \sigma_{\texttt{f}}(r+\upsilon s, \upsilon){\rm d}s \},$ for $t\geq 0.
$

When fission occurs, the smashing of the atomic nucleus produces lower mass isotopes and releases a random number of neutrons, say $N\geq 0$, which are ejected from the point of impact with randomly distributed, and possibly correlated, velocities, say $\{\upsilon_i: i=1,\cdots, N\}$. The outgoing velocities are described by  the atomic random measure 
\begin{equation}
\label{PP}
\mathcal{Z}(A): = \sum_{i= 1}^{N } \delta_{\upsilon_i}(A), \qquad A\in\mathcal{B}(V).
\end{equation} 

If such an event occurs at location $r\in\mathbb{R}^d$ from a particle with incoming velocity $\upsilon\in{V}$, 
we denote by ${\mathcal P}_{(r,\upsilon)}$ the law of $\mathcal{Z}$.
The probabilities ${\mathcal P}_{(r,\upsilon)}$ are such that, for  $\upsilon'\in{V}$, for bounded and measurable $g: V\to[0,\infty)$,
\begin{align}
\int_V g(\upsilon')\pi_{\texttt{f}}(r, v, \upsilon')\d\upsilon' &= {\mathcal E}_{(r,\upsilon)}\left[\int_V g(\upsilon')\mathcal{Z}(\d \upsilon')\right]
=: {\mathcal E}_{(r,\upsilon)}[\langle g, \mathcal{Z}\rangle].
\label{Erv}
\end{align}
Note, the possibility that $\Pr(N = 0)>0$, which will be tantamount to neutron capture (that is, where a neutron slams into a nucleus but no fission results and the neutron is absorbed into the nucleus). 
\end{enumerate}

The NBP is thus parameterised by the quantities $\sigma_{\texttt s}, \pi_{\texttt s}, \sigma_{\texttt f}$ and the family of measures ${\mathcal P} =({\mathcal P}_{(r,\upsilon)} , r\in D,\upsilon\in V)$ and accordingly we refer to it as a $(\sigma_{\texttt s}, \pi_{\texttt s}, \sigma_{\texttt f}, \mathcal{P})$-NBP. It is associated to the NTE via the relation \eqref{Erv},  and, although a $(\sigma_{\texttt s}, \pi_{\texttt s}, \sigma_{\texttt f}, \mathcal{P})$-NBP is uniquely defined, a NBP specified by $(\sigma_{\texttt s}, \pi_{\texttt s}, \sigma_{\texttt f}, \pi_{\texttt f})$ alone is not.  Nonetheless, it is easy to show that for a given $\pi_{\texttt f}$,  a $(\sigma_{\texttt s}, \pi_{\texttt s}, \sigma_{\texttt f}, \mathcal{P})$-NBP always exists which satisfies \eqref{Erv}. See the discussion in Section 2 of \cite{SNTE}.

%
%
%


\bigskip

Define
\begin{equation}
\psi_t[g](r,\upsilon) : = \mathbb{E}_{\delta_{(r, \upsilon)}}[\langle g, X_t \rangle], \qquad t\geq 0, r\in \bar{D}, \upsilon\in{V},
\label{semigroup}
\end{equation}
where $\mathbb{P}_{\delta_{(r, \upsilon)}}$ is the law of $X$ initiated from a single particle with configuration $(r, \upsilon)$, and $g\in L^+_\infty(D\times V)$, the space of non-negative uniformly bounded measurable functions on  $D\times V$. Here we have made a slight abuse of notation (see $\langle \cdot,\cdot\rangle$ as it appears in \eqref{Erv}) and written $\langle g, X_t \rangle$ to mean $\textstyle{\int_{D\times V}} g(r,\upsilon)X_t(\d r,\d \upsilon)$. The following result was shown in~\cite{MultiNTE, SNTE, DL6, D}.

\begin{theo}\label{mildlemma} {\color{black}
Suppose (H1) and (H2) hold.} For $g\in L^+_\infty(D\times V)$, the space of non-negative and uniformly bounded measurable functions on $D\times V$, there exist  constants $C_1,C_2>0$ such that  $\psi_t[g]$, as given in \eqref{semigroup}, is uniformly bounded by $ C_1\exp(C_2 t)$, for all $t\geq 0$. Moreover, $(\psi_t[g], t\geq 0)$ is the unique solution, which is bounded in time, to the so-called mild equation
\begin{equation}
\psi_t[g] = \emph{\texttt{U}}_t[g] + \int_0^t \emph{\texttt{U}}_s[({\ebS} + {\ebF})\psi_{t-s}[g]]\d s, \qquad t\geq 0,
\label{mild}
\end{equation}
for which \eqref{BC} holds, where the deterministic evolution $
\eU_t[g]( r,\upsilon) = g( r+\upsilon t, \upsilon)\mathbf{1}_{\{t<\kappa^D_{r,\upsilon}\}}, t\geq 0,
$
with $
\kappa_{r,\upsilon}^{D} := \inf\{t>0 : r+\upsilon t\not\in D\},
$
represents the advection semigroup associated with a single neutron travelling at velocity $\upsilon$ from $r$ at $t=0$.
\end{theo}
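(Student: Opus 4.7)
The plan is to prove the exponential bound by estimating $\mathbb{E}_{\delta_{(r,\upsilon)}}[N_t]$, derive the mild equation \eqref{mild} by conditioning on the first scatter, fission, or boundary-crossing event of the initial particle, and then establish uniqueness through a Gronwall estimate applied to the difference of two solutions.

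For the bound, I would let $N_t := X_t(D\times V)$ and $m_t(r,\upsilon) := \mathbb{E}_{\delta_{(r,\upsilon)}}[N_t]$. Under (H1), the rates $\sigma_{\texttt{s}}, \sigma_{\texttt{f}}$ are bounded by some constant $\bar\sigma$ and the mean fission yield $\bar m := \sup_{(r,\upsilon)} \int_V \pi_{\texttt{f}}(r,\upsilon,\upsilon')\,\d\upsilon'$ is finite. Applying the first-event decomposition of the next step with $g \equiv 1$ produces a linear renewal inequality of the form $\sup m_t \leq 1 + \bar\sigma(\bar m + 1)\int_0^t \sup m_s\,\d s$, and Gronwall yields $\sup m_t \leq \exp(C_2 t)$ with $C_2 := \bar\sigma(\bar m + 1)$; since $\psi_t[g] \leq \|g\|_\infty m_t$, the bound follows with $C_1 = \|g\|_\infty$.

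For the mild equation, let $T := T_{\texttt{s}} \wedge T_{\texttt{f}} \wedge \kappa^D_{r,\upsilon}$ denote the time of the first scatter, fission, or boundary-crossing event of the initial particle. Conditioning on $T$ and its type: if $T > t$ the contribution is $g(r+\upsilon t, \upsilon)\mathbf{1}_{\{t < \kappa^D_{r,\upsilon}\}}$ weighted by the survival density $\exp(-\int_0^t \sigma\,\d u)$; if $T \leq t$ and the event is a scatter or fission at $(r+\upsilon T, \upsilon)$, the strong Markov and branching properties of $X$ at $T$ allow me to express the post-event contribution as a sum of independent copies of $\psi_{t-T}[g]$ initiated at the post-event velocities, then averaged using $\pi_{\texttt{s}}$ in the scatter case and \eqref{Erv} in the fission case. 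Reassembling these pieces gives the absorption-weighted mild representation, from which \eqref{mild} follows after reorganising via the identity $\int_0^t \sigma(r+\upsilon s,\upsilon)\exp(-\int_0^s \sigma\,\d u)\,\d s = 1 - \exp(-\int_0^t \sigma\,\d u)$ and collecting terms. Uniqueness is then immediate: the difference $\delta_t$ of two bounded-in-time solutions satisfies $\delta_t = \int_0^t \U_s[(\bS + \bF)\delta_{t-s}]\,\d s$, and since $\U_s$ is an $L_\infty$-contraction and $\bS + \bF$ is bounded on $L_\infty$ by a constant $K$ depending only on $\bar\sigma$ and $\bar m$, Gronwall gives $\delta \equiv 0$.

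The main technical obstacle is the rigorous first-event decomposition for the measure-valued process $X$: one must apply the strong Markov property of $X$ at the random first-event time $T$ and, crucially, average out the random number and correlated velocities of fission offspring via \eqref{Erv} \emph{before} iterating the branching property, so as to pass from a statement about the atomic measure $X$ to a scalar renewal-type equation for $\psi_t[g]$. This step relies on the measure-theoretic construction of $X$ and its natural filtration carried out in \cite{SNTE}; without that scaffolding the conditioning argument remains formal.
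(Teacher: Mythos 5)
Your proposal is correct and follows essentially the same route as the proof this paper relies on from the cited works \cite{SNTE, MultiNTE}: a first-event (scatter/fission/boundary-exit) decomposition using the strong Markov and branching properties together with \eqref{Erv}, an expectation bound under (H1) via a renewal inequality and Gronwall, and uniqueness by a Gronwall estimate on the difference of two solutions. The one step to make explicit is your ``collecting terms'': the first-event decomposition naturally yields the mild form with the $\exp(-\int_0^t\sigma)$-weighted advection semigroup and the operators $\mathcal{S}+\mathcal{F}$, and passing to the unweighted $\texttt{U}_t$ form \eqref{mild} requires the scatter/fission operators appearing there to carry the $-\sigma f$ killing term (as they are defined in the cited papers), which is exactly what your integration-by-parts identity accomplishes.
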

In \cite{MultiNTE} the below result was shown, which demonstrates the context in which the mild solution to the NTE and the ACP can be seen to coincide.

\begin{theo}\label{thm:co-semi} {\color{black}
Suppose (H1) and (H2) hold.} If $g\in L^+_\infty(D\times V)$ and if $(\psi_t[g], t\geq 0)$ is understood as the solution to the mild equation \eqref{mild}, then for $t\geq 0$,  $\emph{\texttt{V}}_t[g] = \psi_t[g]$ on $ L^+_2(D\times V)$, i.e.
$\norm{{\texttt{\emph V}}_t[g] - \psi_t[g]}_2 = 0$.  
\end{theo}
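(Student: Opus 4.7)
The strategy is to show that both $\texttt{V}_t[g]$ (viewed as an element of $L_2(D\times V)$) and $\psi_t[g]$ (viewed via the $L_\infty$-bound of Theorem~\ref{mildlemma}) satisfy the same Volterra integral equation in $L_2(D\times V)$, and then appeal to uniqueness via Gronwall. First I would note that since $D\subseteq \RR^3$ is bounded and $V$ has bounded velocity magnitudes, $D\times V$ has finite Lebesgue measure, so $L^+_\infty(D\times V)\subset L^+_2(D\times V)$; hence both $g$ and the time-$t$ quantities $\psi_t[g]$, $\texttt{V}_t[g]$ live in $L_2(D\times V)$.

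Next, I would identify $\texttt{U}_t$ as the $c_0$-semigroup on $L_2(D\times V)$ generated by $(\mathcal{T},\mathrm{Dom}(\mathcal{T}))$ with appropriate boundary conditions, where $\mathcal{T}f(r,\upsilon)=\upsilon\cdot\nabla f(r,\upsilon)-\sigma(r,\upsilon)f(r,\upsilon)$; this is standard since free streaming with bounded killing rate $\sigma$ produces a bounded, contractive-up-to-exponential semigroup. Under (H1), the scatter-plus-fission operator $\mathcal{S}+\mathcal{F}$ is a bounded linear operator from $L_2(D\times V)$ into itself (its norm is controlled by the uniform bounds on $\sigma_{\texttt{s}},\sigma_{\texttt{f}},\pi_{\texttt{s}},\pi_{\texttt{f}}$ together with the finite measure of $V$). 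By the standard bounded-perturbation theorem for $c_0$-semigroups (see e.g.\ Pazy), $\bA=\mathcal{T}+(\mathcal{S}+\mathcal{F})$ with $\mathrm{Dom}(\bA)=\mathrm{Dom}(\mathcal{T})$ generates a $c_0$-semigroup $(\texttt{V}_t,t\ge 0)$ on $L_2(D\times V)$ whose orbit obeys the Duhamel variation-of-constants formula
\begin{equation*}
\texttt{V}_t[g]=\texttt{U}_t[g]+\int_0^t \texttt{U}_{s}\bigl[(\mathcal{S}+\mathcal{F})\texttt{V}_{t-s}[g]\bigr]\,\mathrm{d}s,
\end{equation*}
with the integral interpreted as a Bochner integral in $L_2(D\times V)$.

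At this point, both $\psi_t[g]$ (by Theorem~\ref{mildlemma}) and $\texttt{V}_t[g]$ satisfy the same mild/Duhamel equation in $L_2(D\times V)$. Subtracting, the difference $w_t:=\texttt{V}_t[g]-\psi_t[g]$ satisfies $w_t=\int_0^t \texttt{U}_s[(\mathcal{S}+\mathcal{F})w_{t-s}]\,\mathrm{d}s$. Taking $L_2$-norms, using $\|\texttt{U}_s\|_{L_2\to L_2}\le 1$ (or at worst an exponential bound from the killing rate $\sigma$), and the boundedness of $\mathcal{S}+\mathcal{F}$ on $L_2$, we obtain an inequality of the form $\|w_t\|_2\le C\int_0^t \|w_{t-s}\|_2\,\mathrm{d}s$, and Gronwall's lemma yields $\|w_t\|_2=0$ for every $t\ge 0$, which is the desired equality.

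The main obstacle, in my view, is the justification that $\texttt{V}_t[g]$ truly satisfies the Duhamel formula with the advection semigroup $\texttt{U}_t$ as the unperturbed dynamics: one needs that $\mathrm{Dom}(\mathcal{T})$ is invariant under the $L_2$ construction and that the perturbation can legitimately be added without shrinking the domain, so that the bounded perturbation theorem applies verbatim. Once this functional-analytic point is in place (it is already implicit in the formulation of Theorem~\ref{thACP}), the rest of the argument is the routine Volterra uniqueness sketched above. A secondary technical point is to verify that the Bochner integral in $L_2$ agrees almost everywhere with the pointwise (or $L_\infty$) integral appearing in \eqref{mild}; this follows from Fubini once one uses that $\psi_t[g]$ is uniformly bounded in time on compact intervals by $C_1\exp(C_2 t)$ and that $\texttt{U}_s$ is a positive operator.
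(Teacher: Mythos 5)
There is no in-paper proof to compare against: Theorem \ref{thm:co-semi} is imported from \cite{MultiNTE}, so your proposal has to be judged on its own merits, and on those terms it is sound and is essentially the standard route taken in that line of work. Under (H1) the operator $\mathcal{S}+\mathcal{F}$ is bounded on $L_2(D\times V)$ (Cauchy--Schwarz together with the finite measure of $V$), the bounded-perturbation theorem gives the variation-of-constants identity for $\texttt{V}_t$, Theorem \ref{mildlemma} together with $L^+_\infty(D\times V)\subset L^+_2(D\times V)$ places $\psi_t[g]$ in the same Volterra equation, and Gronwall applied to the locally bounded function $t\mapsto\norm{\texttt{V}_t[g]-\psi_t[g]}_2$ closes the argument. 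Two remarks. First, the ``main obstacle'' you flag is not one: the bounded-perturbation theorem requires no domain invariance, the perturbed generator automatically has the same domain as the unperturbed one, and the Duhamel identity holds for every $g\in L_2(D\times V)$, not only for $g$ in the domain. Second, be careful which unperturbed semigroup you pair with which perturbation: as the operators are defined in this paper, $\mathcal{T}$ contains the absorption term $-\sigma$, whereas $\texttt{U}_t$ as written in Theorem \ref{mildlemma} is pure advection killed only at $\partial D$, with no factor ${\rm e}^{-\int_0^t\sigma(r+\upsilon s,\upsilon){\rm d}s}$. Your choice (unperturbed semigroup generated by $\mathcal{T}$, perturbation $\mathcal{S}+\mathcal{F}$) is the pairing consistent with $\bA=\mathcal{T}+\mathcal{S}+\mathcal{F}$; the alternative is to keep the literal $\texttt{U}_t$ of \eqref{mild} and perturb by $\mathcal{S}+\mathcal{F}-\sigma\,\mathcal{I}$. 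Either way the boundedness of the perturbation and the Gronwall step go through unchanged; you just need to fix one convention so that the equation satisfied by $\psi_t$ and the one satisfied by $\texttt{V}_t$ are literally the same, and the Fubini point you raise (identifying the pointwise time integral in \eqref{mild} with the Bochner integral in $L_2$) is indeed routine given the bound $C_1{\rm e}^{C_2t}$ from Theorem \ref{mildlemma}.
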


 In the probabilistic setting, the meaning of \eqref{lambda} can be interpreted as looking for a pair $\lambda$ and $\varphi$ such that, pointwise on $D\times V$, $\psi_t[\varphi] = {\rm e}^{\lambda t}\varphi$, for $t\geq 0$. As alluded to in \eqref{psitilde}, we have a similar asymptotic to \eqref{Vtilde}, which isolates the eigenpair $(\lambda, \varphi)$ in its limit. The notion of `leading' in the probabilistic setting is less obvious, however, due to Theorem \ref{thm:co-semi}, the eigenpairs that emerge from the mild setting and the weak linear PIDE setting should in principle agree on $L_2(D\times V)$. This is discussed with greater precision in \cite{MultiNTE, SNTE}.

\subsection{Neutron random walk (NRW)}\label{psi2}
There is a second stochastic representation of the unique solution to \eqref{mild}, which makes use of the so-called  neutron random walk (NRW). This process takes values in $D\times V$ and is defined by its scatter rates, $\alpha(r,\upsilon)$, $r\in D, \upsilon\in V$, and scatter probability densities $\pi(r,\upsilon,\upsilon')$, $r\in D, \upsilon,\upsilon'\in V$. When issued with a velocity $\upsilon$, the NRW will propagate linearly with that velocity until either it exits the domain $D$, in which case it is killed, or at the random time $T_{\texttt{s}}$ a scattering occurs, where $\Pr(T_{\texttt{s}}>t) = \exp\{-\textstyle{\int_0^t}\alpha(r+\upsilon t, \upsilon){\rm d}s \}, $ for $t\ge 0.$ When the scattering event occurs at position-velocity configuration $(r,\upsilon)$, a new velocity $\upsilon'$ is selected with
probability $\pi(r,\upsilon,\upsilon')\d\upsilon'$. If we denote by
$(R,\Upsilon) = ((R_t, \Upsilon_t), t\ge 0)$, the position-velocity of the resulting continuous-time random walk on $D\times V$ with an additional cemetery state   for when it leaves the domain $D$, it is easy to show that $(R,\Upsilon)$ is a Markov process. We call the process $(R,\Upsilon)$ an $\alpha\pi$-NRW. 

\bigskip

Given a NBP defined by $\sigma_{\texttt s}$, $\sigma_{\texttt f}$, $\pi_{\texttt{s}}$ and $\mathcal{P}$, set
\begin{align}
\alpha(r,\upsilon)\pi(r, \upsilon, \upsilon') = \sigma_{\texttt{s}}(r,\upsilon)\pi_{\texttt{s}}(r, \upsilon, \upsilon') + \sigma_{\texttt{f}}(r,\upsilon) \pi_{\texttt{f}}(r, \upsilon, \upsilon')\qquad r\in D, \upsilon, \upsilon'\in V.
\label{alpha}
\end{align}
and
\begin{equation}
\beta(r,\upsilon)=\sigma_{\texttt{f}}(r,\upsilon)\left(\int_V\pi_{\texttt{f}}(r, \upsilon,\upsilon')\d\upsilon'-1\right).
\label{betadef}
\end{equation}
The following result, given in~\cite{MultiNTE}, gives the so-called {\it many-to-one} representation of solution to the NTE in the form \eqref{mild}. 
\begin{lemma}\label{timeM21} {\color{black}
Suppose (H1) and (H2) hold,}
 we have that 
\begin{equation}
\psi_t[g](r,\upsilon)  = \mathbf{E}_{(r,\upsilon)}\left[{\rm e}^{\int_0^t\beta(R_s, \Upsilon_s)\D s}g(R_t, \Upsilon_t) \mathbf{1}_{\{t < \tau^D\}}\right], \qquad t\geq 0,r\in D, \upsilon\in V,
\label{phi}
\end{equation}
is a second representation of the unique mild solution (in the sense of Theorem \ref{mildlemma}) of the NTE \eqref{mild}, where $\tau^D = \inf\{t>0 : R_t\not\in D\}$ and ${\bf P}_{(r, v)}$ for the law of the $\alpha\pi$-NRW starting from a single neutron with configuration $(r, \upsilon)$. 
\end{lemma}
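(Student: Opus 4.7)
The strategy is to show that the right-hand side of~\eqref{phi}, denoted $\phi_t[g](r,\upsilon)$ below, satisfies the mild equation~\eqref{mild} within the class of solutions bounded in time, and then to invoke the uniqueness statement of Theorem~\ref{mildlemma}.

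First, I would establish the required boundedness. Under (H1), $\sigma_\texttt{f}$ and $\int_V \pi_\texttt{f}(r,\upsilon,\upsilon')\d\upsilon'$ are uniformly bounded, hence so is $\beta$ by~\eqref{betadef}. Combined with $g \in L^+_\infty(D\times V)$, this yields $\phi_t[g](r,\upsilon) \le \|g\|_\infty e^{\|\beta\|_\infty t}$, placing $\phi_t[g]$ in the uniqueness class.

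The main step is a renewal decomposition on the first scattering time $T_\texttt{s}$ of the $\alpha\pi$-NRW started from $(r,\upsilon)$, whose tail is $\mathbf{P}(T_\texttt{s} > s) = \exp(-\int_0^s \alpha(r+\upsilon u,\upsilon)\d u)$. On $\{T_\texttt{s} > t\} \cap \{t < \kappa^D_{r,\upsilon}\}$, the path $(R,\Upsilon)$ traverses the deterministic ray $u\mapsto (r+\upsilon u, \upsilon)$; on $\{T_\texttt{s} = s\le t\} \cap \{s < \kappa^D_{r,\upsilon}\}$, the strong Markov property at $T_\texttt{s}$, together with the post-jump velocity law $\pi(r+\upsilon s, \upsilon, \upsilon')\d\upsilon'$, expresses the remaining contribution as an integral of $\phi_{t-s}[g](r+\upsilon s, \upsilon')$. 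Combining the two contributions yields
\begin{align*}
\phi_t[g](r,\upsilon) &= e^{\int_0^t (\beta-\alpha)(r+\upsilon u,\upsilon)\d u}\, g(r+\upsilon t, \upsilon)\mathbf{1}_{\{t<\kappa^D_{r,\upsilon}\}} \\
&\quad + \int_0^t e^{\int_0^s (\beta-\alpha)(r+\upsilon u,\upsilon)\d u}\, \alpha(r+\upsilon s,\upsilon)\mathbf{1}_{\{s<\kappa^D_{r,\upsilon}\}} \int_V \pi(r+\upsilon s, \upsilon, \upsilon')\phi_{t-s}[g](r+\upsilon s, \upsilon')\d\upsilon'\,\d s.
\end{align*}
From the definitions~\eqref{alpha} and~\eqref{betadef} one verifies the key identities $\alpha - \beta = \sigma$ and $\alpha(r,\upsilon)\pi(r,\upsilon,\upsilon') = \sigma_\texttt{s}\pi_\texttt{s}(r,\upsilon,\upsilon') + \sigma_\texttt{f}\pi_\texttt{f}(r,\upsilon,\upsilon')$. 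Substituting these identities recasts the two terms above, respectively, as $\eU_t[g](r,\upsilon)$ and $\int_0^t \eU_s[(\ebS+\ebF)\phi_{t-s}[g]](r,\upsilon)\d s$ (with the $\sigma$-absorption carried by the propagator $\eU$ in the convention of~\eqref{mild}). Hence $\phi_t[g]$ solves~\eqref{mild}, and uniqueness from Theorem~\ref{mildlemma} delivers $\phi_t[g]=\psi_t[g]$.

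The main technical obstacle is bookkeeping the exponential weights: one must see precisely how the jump-time density $\alpha e^{-\int \alpha}$ and the Feynman--Kac weight $e^{\int\beta}$ combine, via $\alpha - \beta = \sigma$, to reproduce the $\sigma$-absorbed advection factor implicit in $\eU$, while simultaneously turning $\alpha\pi$ into the fission-plus-scattering kernel $\sigma_\texttt{s}\pi_\texttt{s}+\sigma_\texttt{f}\pi_\texttt{f}$ needed for $\ebS+\ebF$. Beyond this, the argument is a routine application of the strong Markov property of the $\alpha\pi$-NRW.
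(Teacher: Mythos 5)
The paper itself offers no proof of this lemma: it is quoted directly from \cite{MultiNTE}, so the only meaningful comparison is with the standard argument given there. Your proposal is essentially that argument: condition the Feynman--Kac expectation for the $\alpha\pi$-NRW on the first scatter time, use the identities $\alpha-\beta=\sigma$ and $\alpha\pi=\sigma_{\texttt{s}}\pi_{\texttt{s}}+\sigma_{\texttt{f}}\pi_{\texttt{f}}$ (both of which you compute correctly from \eqref{alpha} and \eqref{betadef}), check the exponential-in-time bound under (H1) so the candidate lies in the uniqueness class, and invoke the uniqueness part of Theorem \ref{mildlemma}. The structure and the main computations are sound.

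The one point you must repair is the final identification with \eqref{mild} as written. Your parenthetical claim that ``the $\sigma$-absorption is carried by the propagator $\eU$ in the convention of \eqref{mild}'' contradicts the paper's definition: Theorem \ref{mildlemma} defines $\eU_t[g](r,\upsilon)=g(r+\upsilon t,\upsilon)\mathbf{1}_{\{t<\kappa^D_{r,\upsilon}\}}$ with no exponential factor. What your first-scatter decomposition actually yields is the equivalent mild form
\[
\phi_t[g](r,\upsilon)={\rm e}^{-\int_0^t\sigma(r+\upsilon u,\upsilon){\rm d}u}\,g(r+\upsilon t,\upsilon)\mathbf{1}_{\{t<\kappa^D_{r,\upsilon}\}}
+\int_0^t{\rm e}^{-\int_0^s\sigma(r+\upsilon u,\upsilon){\rm d}u}\,\mathbf{1}_{\{s<\kappa^D_{r,\upsilon}\}}\,\bigl[(\mathcal{S}+\mathcal{F})\phi_{t-s}[g]\bigr](r+\upsilon s,\upsilon)\,{\rm d}s,
\]
i.e.\ the version with the $\sigma$-absorbed transport semigroup and the calligraphic (pure gain) operators. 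In \eqref{mild}, by contrast, $\ebS$ and $\ebF$ are the operators inherited from \cite{SNTE,MultiNTE}, which contain the local loss terms $-\sigma_{\texttt{s}}f$ and $-\sigma_{\texttt{f}}f$; it is precisely these terms that compensate for the absence of the exponential weight in $\eU$. So to conclude you should either work with those definitions of $\ebS,\ebF$ from the outset, or insert the standard (Gronwall-type) lemma showing that the two mild forms have the same solutions within the relevant boundedness class, and only then apply the uniqueness statement of Theorem \ref{mildlemma}. This is a bookkeeping correction rather than a conceptual flaw, but as written the equation you verify is not literally the equation \eqref{mild} whose uniqueness you invoke.
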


\subsection{Neutron generational process (NGP)}
In order to solve the $k$-eigenvalue problem~\eqref{keval2}, it turns out that $(\psi_t, t\geq 0)$ and $(\phi_t, t\geq 0)$ are not the right objects to work with on account of their  time-dependency. We now consider a generational model of the NBP. We can think of each line of descent in the sequence of  neutron creation as genealogies. 
In place of $(X_t, t\geq 0)$, we consider the process $(\mathcal{X}_n, n\geq 0)$, where, for $n\geq 1$, $\mathcal{X}_n$ is $\mathcal{M}(D\times V)$-valued and can be written $\mathcal{X}_n = \sum_{i = 1}^{\mathcal{N}_n}\delta_{(r^{(n)}_i, \upsilon^{(n)}_i )}$, where $\{(r^{(n)}_i, \upsilon^{(n)}_i ), i = 1, \cdots \mathcal{N}_n\}$ are the position-velocity configurations of the $\mathcal{N}_n$ particles
that are $n$-th in their genealogies to be the result of a fission event. $\mathcal{X}_0$ is consistent with $X_0$ and is the initial configuration of neutron positions and velocities. As such, for $n\geq 1$ we can think of $\mathcal{X}_n$ as the $n$-th generation of the system and we refer to them as the neutron generational process (NGP). The reader who is more experienced with the theory of branching processes will know $\mathcal{X}_n$ to be an example of what is called a stopping line; see \cite{Kyp}.

\bigskip

Appealing to the obvious meaning of $\langle g, \mathcal{X}_n\rangle$, define the expectation semigroup $(\Psi_n, n \ge 0)$ by
\begin{equation}
\Psi_n[g](r, \upsilon) = \mathbb{E}_{\delta_{(r, \upsilon)}}\left[\langle g, \mathcal{X}_n\rangle \right], \qquad n\geq 0, r\in D, \upsilon\in V,
\label{expsemi}
\end{equation}
with $\Psi_0[g]: = g\in L_\infty^+(D\times V)$.
The main motivation for introducing the NGP  is that, just as we have seen that the meaning of \eqref{lambda} can be phrased in terms of a multiplicative invariance with respect to the solution of an ACP \eqref{ACP} or of the mild equation \eqref{mild}, we want to identify the eigen-problem \eqref{keval2} in terms of the semigroup above.

\bigskip

To this end, let us introduce the problem of finding a pair $k > 0$ and $\phi\in L^+_\infty(D\times V)$ such that, pointwise, 
\begin{equation}
\Psi_1[\phi](r,\upsilon) = k\phi(r,\upsilon), \qquad r\in D, \upsilon\in V.
\label{prob}
\end{equation}
In the next section we will show the existence of a solution to \eqref{prob} which also plays an important role in the asymptotic behaviour of $\Psi_n$ as $n\to\infty$. Before getting there, let us give a heuristic argument as to why \eqref{prob} is another form of the eigenvalue problem \eqref{keval2}.

\bigskip

By splitting on the first fission event, $\Psi_n$ solves the following mild equation
\begin{equation}
\Psi_n[g](r, \upsilon) = \int_0^\infty \texttt{Q}_s\left[{\mathcal F} \Psi_{n-1}[g]\right](r, \upsilon){\rm d}s,\qquad r\in D, \upsilon\in V, g\in L_\infty^+(D\times V),
\label{mildNTE}
\end{equation}
where $(\texttt{Q}_s,s \ge 0)$ is the expectation semigroup associated with the operator $\mathcal{T} + \mathcal{S}$. More precisely,
\begin{equation}
\texttt{Q}_s[g](r, \upsilon) = \mathbb{E}_{\delta_{(r, \upsilon)}}\left[{\rm e}^{-\int_0^s\sigma_\texttt{f}(R_u, \Upsilon_u){\rm d}u}g(R_s, \Upsilon_s)\mathbf{1}_{(s < \tau_D)}\right],
\label{T+S}
\end{equation}
where $(R_s, \Upsilon_s)_{s \ge 0}$ is the $\sigma_\texttt{s}\pi_\texttt{s}$-NRW.
Then, if the pair $(k, \phi)$ solves \eqref{prob}, the strong Markov property along with an iteration implies that
\[
k^n\phi(r, \upsilon) = \Psi_n[\phi](r, \upsilon)
, \qquad r\in D, \upsilon\in V.
\]
Using it in~\eqref{mildNTE} and dividing through by $k^n$ yields
\begin{equation}
\phi(r, \upsilon) = \int_0^\infty \texttt{Q}_s\left[\frac{1}{k}\mathcal{F}\phi\right](r, \upsilon) {\rm d}s.
\label{mildeval}
\end{equation}
Now set 
\[
V_t \coloneqq \int_0^t \texttt{Q}_s\left[g\right](r, \upsilon){\rm d}s.
\]
Then, heuristically speaking, since $\texttt{Q}$ is associated to the generator $\mathcal{T}+\mathcal{S}$, classical Feynman-Kac theory suggests that $V_t$ `solves' the equation
\[
\frac{\partial V_t}{\partial t} = (\mathcal{T} + \mathcal{S})V_t + g.
\]
with $V_0 = 0$. Note that $\partial V_t/\partial t = \Q_t[g]$, which tends to zero as $t \to \infty$ thanks to the transience of $(R,\Upsilon)$.
Hence, taking $g = {k}^{-1}\mathcal{F}\phi$, letting $t \to \infty$ in the above equation, recalling that $(\texttt{Q}_s,s \ge 0)$ is the expectation semigroup associated with the operator $\mathcal{T} + \mathcal{S}$, and using the identity~\eqref{mildeval} yields
\[
0 = (\mathcal{T} + \mathcal{S})\phi + \frac{1}{k}\mathcal{F}\phi.
\]

\section{Probabilistic  solution to \eqref{keval2}}\label{QSD}

In this section we state our main result regarding the existence of the eigenvalue and eigenfunction as specified by \eqref{prob}. We are once more motivated by the ideas presented in~\cite{CV} and will use some of the techniques that were further developed in~\cite{SNTE}.

\bigskip

We start by constructing the many-to-one formula that is associated to the semigroup $(\Psi_n,n\geq 0)$ in the spirit of the two representations of $(\psi_t, t\geq 0)$ given in Sections \ref{psi1} and \ref{psi2}. In this case it takes a slightly different form to the one in the time-dependent case. For ease of notation, let
\[
m(r, \upsilon) \coloneqq \int_V\pi_\texttt{f}(r, \upsilon, \upsilon'){\rm d}\upsilon',
\]
denote the mean number of offspring generated by a fission event at $(r, \upsilon)$, and let $(T_n, n \ge 1)$ denote the time of the scatter event in the $\alpha\pi$-NRW that corresponds to the $n$-th fission event in the corresponding NBP, $X$. 
\bigskip

More formally, referring to, \eqref{alpha}, we can think of the $\alpha\pi$-NRW at each scatter event as follows. For $k\geq 1$, when  the NRW $(R,\Upsilon)$ scatters for the $k$-th time at $(r,\upsilon)$ (with rate $\alpha(r,\upsilon)$), a coin is tossed and the random variable $\texttt{I}_k(r, \upsilon)$ takes the value $1$ with probability $
{\sigma_{\texttt{f}}(r,\upsilon)m(r,\upsilon)}/{ ({\sigma_{\texttt s}(r,\upsilon) + \sigma_{\texttt f}(r,\upsilon)m(r, \upsilon)}) }
$ and its new velocity, is selected according to an independent copy of the random variable $\Theta^{\texttt f}_k(r, \upsilon)$, whose distribution has  probability density $\pi_{\texttt f}(r,\upsilon, \upsilon')/m(r,\upsilon)$. On the other hand, with probability 
 ${\sigma_{\texttt s}(r,\upsilon)}/({\sigma_{\texttt s}(r,\upsilon) + \sigma_{\texttt f}(r,\upsilon)m(r, \upsilon)})$
the random variable $\texttt{I}_k(r, \upsilon)$  takes the value $0$
and its new velocity, is selected according to an independent copy of the random variable $\Theta^{\texttt s}_k(r, \upsilon)$, whose distribution has probability density $\pi_{\texttt s}(r,\upsilon,\upsilon')$. As such, the  velocity immediately after the $n$-th scatter of the NRW, given that the position-velocity configuration immediately before is $(r,\upsilon)$, is coded by the random variable 
\[
\texttt{I}_k (r,\upsilon)\Theta^{\texttt f}_k(r,\upsilon) + (1-\texttt{I}_k(r,\upsilon) )\Theta^{\texttt s}_k(r,\upsilon).
\]
We thus can identify sequentially, $T_0 = 0$ and, for $n\geq 1$,
\[
T_n = \inf\{t>T_{n-1} : \Upsilon_{t}\neq \Upsilon_{t-} \text{ and }\texttt{I}_{k_t}(R_t, \Upsilon_{t-}) =1\},
\]
where $(k_t, t\geq 0)$ is the process counting the number of scattering events of the NRW up to time $t$.
\bigskip

{\color{black}Note, for the above construction of indicators to make sense, we should at least have some region of space for which fission can take place. As such the assumption (H3) becomes relevant here.} Analogously to Lemma~\ref{timeM21}, we have the following many-to-one formula associated with the NBP. 
\begin{lem} {\color{black} Suppose (H1), (H2) and (H3) hold.} The solution to \eqref{mildNTE} among the class of expectation semigroups is unique for $g\in L_\infty^+(D\times V)$ and 
the semigroup $(\Psi_n, n\geq 0)$ may alternatively be represented\footnote{Here, we define $\prod_{i = 1}^0\cdot \coloneqq 1$.} as 
\begin{equation}
\Psi_n[g](r, \upsilon) = \mathbf{E}_{(r, \upsilon)}\left[\prod_{i = 1}^n m(R_{T_i}, \Upsilon_{T_i-})g(R_{T_n}, \Upsilon_{T_n})\mathbf{1}_{(T_n < \kappa^D)} \right],\qquad r\in D, \upsilon\in V, n\geq 1,
\label{M21}
\end{equation}
(with $\Psi_0[g] =g$), where $(R_t, \Upsilon_t)_{t \ge 0}$ is the $\alpha\pi$-NRW, and 
\[
\kappa^D \coloneqq \inf\{t > 0 : R_{t} \notin D\}.
\]
\end{lem}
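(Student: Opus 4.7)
The plan is to proceed by induction on $n$, using the strong Markov property of the $\alpha\pi$-NRW at fission times together with the mild equation~\eqref{mildNTE} as the main tool. Uniqueness is the easy part to dispatch first: \eqref{mildNTE} is a first-order recursion expressing $\Psi_n$ as a functional of $\Psi_{n-1}$, with $\Psi_0 = g$ given. Under (H1), $\mathcal{F}$ maps $L^+_\infty(D\times V)$ into itself with bounded norm, and the integral $\int_0^\infty \texttt{Q}_s[\cdot]\,\D s$ converges because the $\sigma_\texttt{s}\pi_\texttt{s}$-NRW almost surely exits the bounded domain $D$ in finite time (velocities are bounded below by $\upsilon_\texttt{min}>0$). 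Hence the recursion determines each $\Psi_n$ uniquely in the class of expectation semigroups valued in $L^+_\infty(D\times V)$.

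For \eqref{M21}, the base case $n=0$ is the empty-product convention together with $\kappa^D>0$ almost surely for $r\in D$. Assume inductively that \eqref{M21} holds for $n-1$. Applying the strong Markov property at the stopping time $T_1$, the portion of the integrand after $T_1$ is, conditional on $\mathcal{F}_{T_1}$, the many-to-one integrand for $n-1$ generations restarted at the post-fission configuration $(R_{T_1},\Upsilon_{T_1})$ (using $T_{i+1} = T_i \circ \theta_{T_1} + T_1$). The inductive hypothesis identifies this inner conditional expectation with $\Psi_{n-1}[g](R_{T_1},\Upsilon_{T_1})$. Combined with~\eqref{mildNTE}, the inductive step reduces to the single-step identity
\[
\int_0^\infty \texttt{Q}_s[\mathcal{F}h](r,\upsilon)\,\D s \;=\; \mathbf{E}_{(r,\upsilon)}\!\left[m(R_{T_1},\Upsilon_{T_1-})\,h(R_{T_1},\Upsilon_{T_1})\,\mathbf{1}_{T_1<\kappa^D}\right],
\]
for arbitrary $h\in L^+_\infty(D\times V)$, which I would apply with $h = \Psi_{n-1}[g]$.

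To prove this identity I would first integrate out the post-fission velocity $\Upsilon_{T_1}$: conditional on the pre-fission state $(R_{T_1-},\Upsilon_{T_1-}) = (r',\upsilon')$, the outgoing velocity has density $\pi_\texttt{f}(r',\upsilon',\cdot)/m(r',\upsilon')$, so the conditional expectation of $h(R_{T_1},\Upsilon_{T_1})$ equals $\mathcal{F}h(r',\upsilon')/(\sigma_\texttt{f}(r',\upsilon')\,m(r',\upsilon'))$. The explicit $m$-factor then cancels, reducing the right-hand side to $\mathbf{E}_{(r,\upsilon)}[\mathcal{F}h(R_{T_1-},\Upsilon_{T_1-})/\sigma_\texttt{f}(R_{T_1-},\Upsilon_{T_1-})\,\mathbf{1}_{T_1<\kappa^D}]$. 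I would then identify the joint law of $(T_1,R_{T_1-},\Upsilon_{T_1-})$ by a thinning/superposition argument that realises the $\alpha\pi$-NRW as an independent scattering subprocess (coupled via the $\texttt{I}_k$-marking construction) together with a fission clock, and couple the resulting pre-$T_1$ trajectory to the $\sigma_\texttt{s}\pi_\texttt{s}$-NRW entering the definition of $\texttt{Q}_s$ in~\eqref{T+S}, matching intensities so that integrating over the joint density of $T_1$ and the pre-fission state produces exactly $\int_0^\infty \texttt{Q}_s[\mathcal{F}h]\,\D s$.

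The main obstacle is this final disintegration/coupling step: the formula~\eqref{M21} propagates along the $\alpha\pi$-NRW (whose scatter rate is $\alpha = \sigma_\texttt{s} + \sigma_\texttt{f} m$) while $\texttt{Q}_s$ is built from the $\sigma_\texttt{s}\pi_\texttt{s}$-NRW with Feynman-Kac discount $e^{-\int \sigma_\texttt{f}}$, and one must show that the product of $m$-factors accumulated at the fission times of the $\alpha\pi$-NRW compensates exactly for the mismatch of rate structures and velocity kernels. Assumption (H3) is used here to ensure that fission actually occurs (so $T_1<\infty$ with positive probability and the iterated definition of $T_n$ is non-degenerate), while (H1) and (H2) supply the boundedness and positivity needed to swap expectations, integrals and the induction step throughout.
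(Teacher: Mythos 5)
Your overall architecture (strong Markov property at $T_1$, induction via the one-step identity, uniqueness from the recursion with $\Psi_0=g$) is the same as the paper's, and your preliminary reductions are correct: integrating out the post-fission velocity with density $\pi_{\texttt{f}}/m$ and cancelling the explicit $m$-factor does reduce the claim to
\begin{equation*}
\mathbf{E}_{(r,\upsilon)}\left[\frac{\mathcal{F}h}{\sigma_{\texttt{f}}}(R_{T_1-},\Upsilon_{T_1-})\,\mathbf{1}_{(T_1<\kappa^D)}\right]
=\int_0^\infty \texttt{Q}_s[\mathcal{F}h](r,\upsilon)\,{\rm d}s .
\end{equation*}
But the step you defer as "the main obstacle" is precisely the mathematical content of the lemma, and your plan for it — "matching intensities so that the product of $m$-factors compensates exactly" — is asserted rather than proved, and in the form you set it up it does not come out. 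Under the marking construction you invoke (mark probability $\sigma_{\texttt{f}}m/(\sigma_{\texttt{s}}+\sigma_{\texttt{f}}m)$ at each $\alpha$-rate scatter), the fission-marked jumps arrive at rate $\sigma_{\texttt{f}}m$ along the pre-$T_1$ trajectory, so the disintegration gives
\begin{equation*}
\int_0^\infty \mathbf{E}^{\texttt{s}}_{(r,\upsilon)}\left[{\rm e}^{-\int_0^s \sigma_{\texttt{f}}m(R_u,\Upsilon_u)\,{\rm d}u}\, m(R_s,\Upsilon_s)\,\mathcal{F}h(R_s,\Upsilon_s)\,\mathbf{1}_{(s<\kappa^D)}\right]{\rm d}s ,
\end{equation*}
where $\mathbf{E}^{\texttt{s}}$ refers to the $\sigma_{\texttt{s}}\pi_{\texttt{s}}$-NRW; this equals $\int_0^\infty\texttt{Q}_s[\mathcal{F}h]\,{\rm d}s$ only when $m\equiv 1$. (A constant-coefficient example on a bounded domain with $h\equiv 1$ shows the two expressions differ strictly, because the exponential discounts ${\rm e}^{-\int\sigma_{\texttt{f}}m}$ and ${\rm e}^{-\int\sigma_{\texttt{f}}}$ weight the killing event $\{s<\kappa^D\}$ differently.)

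What the identity actually requires — and what the paper's computation uses — is that, conditionally on the pre-fission trajectory being a $\sigma_{\texttt{s}}\pi_{\texttt{s}}$-NRW path, the time $T_1$ has conditional density $\sigma_{\texttt{f}}(R_s,\Upsilon_s)\,{\rm e}^{-\int_0^s\sigma_{\texttt{f}}(R_u,\Upsilon_u){\rm d}u}$: the fission times along the chosen line of descent arrive at rate $\sigma_{\texttt{f}}$ (the NBP's fission clock), the outgoing velocity is then drawn from $\pi_{\texttt{f}}/m$, and the weight $m$ collected at that instant restores $\sigma_{\texttt{f}}\,m\,\tilde{\mathcal{F}}=\mathcal{F}$, yielding exactly $\int_0^\infty\texttt{Q}_s[\mathcal{F}\Psi_{n-1}[g]]\,{\rm d}s$. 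So to close your argument you must either justify this rate-$\sigma_{\texttt{f}}$ description of $(T_n)$ (e.g.\ by coupling the spine to the NBP's fission events, which is how the $T_n$ are meant to be read), or else change the weighting (a weight ${\rm e}^{\int_0^{T_n}\sigma_{\texttt{f}}(m-1)}$ would compensate the $\sigma_{\texttt{f}}m$-rate marking); simply appealing to "exact compensation by the $m$-factors" at the marked scatters of the $\alpha\pi$-NRW leaves the decisive computation missing and, taken literally, leads to the wrong kernel.
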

\begin{proof}
We first note that the sequence  $(\Psi_n, n\geq 0)$ as defined in \eqref{M21} is a semigroup since, due to the strong Markov property, we have 
\begin{align*}
&\Psi_{n+m}[g](r, \upsilon) \\
&= \mathbf{E}_{(r, \upsilon)}\left[\mathbf{E}\left[\prod_{i = 1}^{n+m} m(R_{T_i}, \Upsilon_{T_i-}) g(R_{T_{n + m}}, \Upsilon_{T_{n + m}})\mathbf{1}_{(T_{n + m} < \kappa^D)} \bigg| \mathcal{F}_n\right]\right] \\
&= \mathbf{E}_{(r, \upsilon)}\left[\prod_{i = 1}^n m(R_{T_i}, \Upsilon_{T_i-}) \mathbf{E}_{(R_{T_n}, \Upsilon_{T_n})}\left[\prod_{i = 1}^m m(R_{T_i}, \Upsilon_{T_i-}) g(R_{T_m}, \Upsilon_{T_m})\mathbf{1}_{(T_m < \kappa^D)} \right]\mathbf{1}_{(T_n < \kappa^D)}\right]\\ 
&= \Psi_n[\Psi_m[g]](r, \upsilon), \qquad r\in D, \upsilon\in V.
\end{align*}

In order to show that $\Psi_n$  as defined in \eqref{M21} does indeed solve~\eqref{mildNTE}, we consider the process at time $T_1$. Before doing so, we first note that the $\alpha\pi$-NRW  is exactly the same as the $\sigma_\texttt{s}\pi_\texttt{s}$-NRW over the  time interval $[0,T_1)$ and, at time $T_1$, the velocity of the former is chosen according to the expectation operator
\[
\tilde{\mathcal{F}}[g](r, \upsilon) \coloneqq \int_Vg(r, \upsilon')\frac{\pi_\texttt{f}(r, \upsilon, \upsilon')}{m(r, \upsilon)}{\rm d}\upsilon'.
\]
Then, applying the strong Markov property at time $T_1$,
\begin{align*}
\Psi_n[g](r, \upsilon) &= \mathbf{E}_{(r, \upsilon)}\left[\prod_{i = 1}^n m(R_{T_i}, \Upsilon_{T_i-})g(R_{T_n}, \Upsilon_{T_n})\mathbf{1}_{(T_n < \kappa^D)} \right]\\
&= \mathbf{E}_{(r, \upsilon)}\left[m(R_{T_1}, \Upsilon_{T_1-})\tilde{\mathcal{F}}[\Psi_{n-1}[g]](R_{T_1}, \Upsilon_{T_1-})\mathbf{1}_{(T_1 < \kappa^D)} \right]\\
&= \int_0^\infty \mathbf{E}_{(r, \upsilon)}\left[ \sigma_\texttt{f}(R_s, \Upsilon_s){\rm e}^{-\int_0^s{\sigma_\texttt{f}(R_u, \Upsilon_u){\rm d}u}}m(R_s, \Upsilon_{s-})\tilde{\mathcal{F}}[\Psi_{n-1}[g]](R_s, \Upsilon_{s-})\mathbf{1}_{(s < \kappa^D)} \right]{\rm d}s\\
&= \int_0^\infty \texttt{Q}_s[\mathcal{F}\Psi_{n-1}[g]](r, \upsilon){\rm d}s,
\end{align*}
where the final equality follows from the fact that $m\sigma_\texttt{f}\tilde{\mathcal{F}} = \mathcal{F}$.

\bigskip

It remains to show that \eqref{mildNTE}  has a unique solution for $g\in L_\infty^+(D\times V)$ among the class of expectation semigroups, suppose that $(\Psi_n', n\geq 0)$ is another such solution with $\Psi_0'=g\in L^+_\infty(D\times V)$. Define $\Phi_n = \Psi_n -\Psi_n'$, for $n\geq 0$, and note by linearity that $(\Phi_n , n\geq 0)$ is another expectation semigroup with $\Phi_0 = 0$. Moreover, by linearity $(\Phi_n, n\geq 0)$ also solves \eqref{mildNTE}. On account of this, it is  straightforward to see by induction that if $\Phi_n = 0$ then $\Phi_{n+1} = 0$.  The uniqueness of \eqref{mildNTE} in the class of expectation semigroups thus follows.
\end{proof}

The next result will provide the existence of a solution to \eqref{prob} by working directly with a variant of the semigroup $(\Psi_n, n\geq 0)$. To this end, note that, under the assumption (H4), for non-negative functions $g$ that are bounded by one, say, we have
\begin{equation}
\mathbb{E}_{\delta_{(r, \upsilon)}}\left[\langle g, \mathcal{X}_1\rangle \right] \le \Vert g \Vert_\infty \mathbb{E}_{\delta_{(r, \upsilon)}}\left[\langle 1, \mathcal{X}_1\rangle \right] \le N_\texttt{max}.
\end{equation}
Dividing both sides of the above inequality yields a sub-Markovian  semigroup. Indeed,
\begin{align}
\Psi^\dagger_n[g](r, \upsilon) &\coloneqq N_\texttt{max}^{-n}\Psi_n[g](r, \upsilon) \notag\\
&= \mathbf{E}_{(r, \upsilon)}\left[\prod_{i = 1}^n \frac{m(R_{T_i}, \Upsilon_{T_i-})}{N_\texttt{max}}g(R_{T_n}, \Upsilon_{T_n})\mathbf{1}_{(T_n < \kappa^D)} \right]\notag\\
&= \mathbf{E}_{(r, \upsilon)}\left[g(R_{T_n}, \Upsilon_{T_n})\mathbf{1}_{(T_n < \kappa^D, \, n< \Gamma)} \right]\notag\\
&\eqqcolon \mathbf{E}_{(r, \upsilon)}^\dagger\left[g(R_{T_n}, \Upsilon_{T_n}) \right],
\label{daggerdef}
\end{align}
where $ \Gamma = \min\{n\geq 0: \texttt{K}_n(R_{T_n}, \Upsilon_{T_n -}) =1\}$ where, for $n\geq 0$, $r\in D$ and $\upsilon \in V$, the random variable  $\texttt{K}_{n}(r,\upsilon)$ is an  independent indicator random variable which is equal to 0 with probability ${m(r, \upsilon)}/{N_{\texttt{max}}}$ (note, from the assumptions in Section \ref{intro}, it follows that $ \sup_{r\in D, \upsilon\in V} m(r,\upsilon)\leq N_{\texttt{max}}$).

\bigskip

We are now ready to state the main result of this section, and indeed the article. As its proof is quite lengthy we will delay it until Section \ref{longproof}.  We will need the following stronger assumption of (H3):
\bigskip

{\color{black}
{\bf 
(H3)$^*$: The fission cross section satisfies $\textstyle{\inf_{r \in D, \upsilon, \upsilon' \in V}\sigma_\texttt{f}(r, \upsilon)\pi_\texttt{f}(r, \upsilon, \upsilon') > 0}$.
}
}

\begin{theorem}\label{CVtheorem}
{\color{black}Under the assumptions (H1), (H3)$^*$ and (H4),} for the semigroup  $(\Psi_n, n\ge 0)$  identified by \eqref{mildNTE}, there exist $k_*\in\mathbb{R}$, a positive\footnote{To be precise, by a positive eigenfunction, we mean a mapping from $D\times V\to (0,\infty)$. This does not prevent it being valued zero on $\partial D$, as $D$ is open.} right eigenfunction $\varphi \in L^+_\infty(D\times V)$ and a left eigenmeasure, $\eta$, on $D\times V$, both having associated eigenvalue $k_*^n$. Moreover, $k_*$ is the leading eigenvalue in the sense that, for all $g\in L^+_{\infty}(D\times V)$, 
\begin{equation}
\langle\eta, \Psi_n[g]\rangle = k_*^n\langle\eta, g\rangle\quad  \text{(resp. } 
\Psi_n[\varphi] = k_*^n\varphi
\text{)} \quad n\ge 0,
\label{leftandright}
\end{equation}
and there exists $\gamma > 1$ such that, for all $g\in L^+_\infty(D\times V)$,
\begin{equation}
  \sup_{g\in L_\infty^+(D\times V): \norm{g}_\infty \leq1}\left\| k_*^{-n}{\varphi}^{-1}{\Psi_n[g]}-\langle\eta, g\rangle\right\|_\infty = O(\gamma^{- n}) \text{ as } n\rightarrow+\infty.
\label{spectralexpsgp}
\end{equation}
\end{theorem}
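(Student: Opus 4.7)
\emph{Setup.} The plan is to apply the quasi-stationary distribution framework of Champagnat and Villemonais \cite{CV} to the sub-Markov semigroup $\Psi_n^\dagger$ introduced in~\eqref{daggerdef}. Since $\Psi_n = N_{\texttt{max}}^n \Psi_n^\dagger$, it is enough to identify a leading eigenvalue $\rho_* \in (0,1]$ together with a right eigenfunction $\varphi$ and a left eigenmeasure $\eta$ for $\Psi^\dagger$; the conclusion of Theorem~\ref{CVtheorem} then follows on setting $k_* = N_{\texttt{max}} \rho_*$. Under $\mathbf{P}^\dagger_{(r,\upsilon)}$, the embedded chain $Y_n \coloneqq (R_{T_n}, \Upsilon_{T_n})$ is Markov on $D\times V$ and is killed at the stopping time $\tau^\dagger \coloneqq \min(\{n : T_n \ge \kappa^D\},\Gamma)$; the representation~\eqref{daggerdef} then reads $\Psi_n^\dagger[g](r,\upsilon) = \mathbf{E}^\dagger_{(r,\upsilon)}[g(Y_n)\mathbf{1}_{n < \tau^\dagger}]$, so the statement becomes a Perron-Frobenius result for this killed chain.

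\emph{Key minorization.} I would verify the two Champagnat-Villemonais hypotheses for $Y$:

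(A1) there exist $n_1 \ge 1$, $c_1 > 0$ and a probability measure $\nu$ on $D\times V$ such that, for all $(r,\upsilon)\in D\times V$,
\[
\mathbf{P}^\dagger_{(r,\upsilon)}\bigl(Y_{n_1} \in \cdot\,,\ n_1 < \tau^\dagger\bigr) \ge c_1\,\nu(\cdot);
\]

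(A2) there exists $c_2 > 0$ with $\mathbf{P}^\dagger_\nu(n < \tau^\dagger) \ge c_2\sup_{(r,\upsilon)}\mathbf{P}^\dagger_{(r,\upsilon)}(n < \tau^\dagger)$ for every $n\ge 0$.

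Assumption (H3)$^*$ is the decisive ingredient for (A1): at a fission step, the post-fission velocity has density $\pi_\texttt{f}(r,\upsilon,\cdot)/m(r,\upsilon)$, which is uniformly bounded below on $V$, so each fission transition injects a velocity component that dominates a multiple of Lebesgue measure on $V$, independently of the incoming configuration. Combined with (H1), which yields uniform exponential tails for inter-scatter durations and, in particular, a uniform positive probability that a single scatter falls in any prescribed small time window, and with the convexity and boundedness of $D$, one then engineers paths of length $n_1$ that, from an arbitrary starting point, reach a prescribed open sub-ball $B'\Subset D$ with a fresh isotropic-like velocity. The resulting multi-step lower bound furnishes a $\nu$ supported in $B'\times V$ and a constant $c_1$ independent of the starting point. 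Condition (A2) then follows by pairing (A1) with a symmetric one-step lower bound for the survival probability, again using (H1) and (H3)$^*$, together with the boundedness of $D$ to compare the survival probability from $\nu$ with its supremum.

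\emph{Conclusion and main obstacle.} Granted (A1) and (A2), the main theorem of \cite{CV} delivers $\rho_* \in (0,1]$, a positive right eigenfunction $\varphi\in L^+_\infty(D\times V)$ and a probability eigenmeasure $\eta$ satisfying
\[
\Psi^\dagger_n[\varphi] = \rho_*^n\varphi,\qquad \langle \eta, \Psi^\dagger_n[g]\rangle = \rho_*^n\langle \eta, g\rangle,\qquad n\ge 0,
\]
for every $g\in L^+_\infty(D\times V)$, together with a uniform geometric convergence $\sup_{\norm{g}_\infty\le 1}\bignorm{\rho_*^{-n}\varphi^{-1}\Psi^\dagger_n[g]-\langle\eta,g\rangle}_\infty = O(\tilde\gamma^{-n})$ for some $\tilde\gamma > 1$. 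Undoing the rescaling with $k_* = N_{\texttt{max}}\rho_*$ yields \eqref{leftandright}-\eqref{spectralexpsgp} with $\gamma = \tilde\gamma$ and the same $\varphi$, $\eta$. The principal obstacle is the explicit verification of (A1) uniformly in the starting point: the killed chain $Y_n$ lives on the non-compact product $D\times V$ with the boundary $\partial D$ acting as a hard absorbing set, each transition mixes deterministic advection with random scatter-plus-fission, and the regularising effect of fission is localised at a random advection time. Producing a minorization that is genuinely uniform in $(r,\upsilon)$, rather than merely uniform on compact sub-regions of $D\times V$, will require a careful geometry-aware path construction exploiting the smoothness and convexity of $D$ together with (H1) and (H3)$^*$.
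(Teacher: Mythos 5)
Your architecture is the same as the paper's: rescale by $N_{\texttt{max}}$ to get the sub-Markov semigroup $\Psi^\dagger_n$, view it as the killed chain $(R_{T_n},\Upsilon_{T_n})$ absorbed at $\texttt{k}$, verify the two Champagnat--Villemonais conditions, invoke \cite[Theorem 2.1]{CV}, and undo the rescaling via $k_*=N_{\texttt{max}}k_c$. However, there is a genuine gap at the heart of the argument: your condition (A1) is stated as an \emph{unconditional} minorization, $\mathbf{P}^\dagger_{(r,\upsilon)}(Y_{n_1}\in\cdot,\ n_1<\tau^\dagger)\ge c_1\nu(\cdot)$ uniformly in $(r,\upsilon)$, and this is false: for $r$ near $\partial D$ with $\upsilon$ pointing outward, $\kappa^D_{r,\upsilon}$ is arbitrarily small, the walk exits $D$ before its first scatter with probability arbitrarily close to one, so $\mathbf{P}^\dagger_{(r,\upsilon)}(n_1<\tau^\dagger)$ has no uniform positive lower bound and no uniform $c_1$ can exist. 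The hypothesis actually required (and used in the paper) is the \emph{conditional} minorization $\mathbf{P}_{(r,\upsilon)}\bigl((R_{T_{n_0}},\Upsilon_{T_{n_0}})\in\cdot\mid n_0<\texttt{k}\bigr)\ge c_1\nu(\cdot)$, and the whole point of the paper's verification (with $n_0=2$ and $\nu$ Lebesgue on $D\times V$) is a cancellation you do not address: both the lower bound $\mathbf{E}^\dagger_{(r_0,\upsilon_0)}[f(R_{T_2},\Upsilon_{T_2})\mathbf{1}_{(T_2=J_2)}]\ge C_2\,\kappa^D_{r_0,\upsilon_0}\int_{D\times V} f$ and the upper bound $\mathbf{P}^\dagger_{(r_0,\upsilon_0)}(T_2<\texttt{k})\le C_3\,\kappa^D_{r_0,\upsilon_0}$ carry the same degenerate factor $\kappa^D_{r_0,\upsilon_0}$, which cancels in the conditional probability. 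The ``principal obstacle'' you flag (uniformity in the starting point) is exactly this step, and it is resolved not by a geometric path construction but by the density estimates for the walk (the analogue of \cite[Lemma 7.3]{SNTE}, restated as the two bounds on $R_{J_7}$ and $R_{J_1}$) combined with (H1) and (H3)$^*$; as written, your proposal does not contain the argument.

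Condition (A2) is also dismissed too quickly. In the paper it is not a ``symmetric one-step bound'': one uses the \emph{upper} density bound at the seventh jump to dominate $\mathbf{P}_{(r,\upsilon)}(n<\texttt{k})$ by a constant multiple of $\int_{D\times V}\mathbf{P}_{(z,w)}(n-7<\texttt{k})\,\mathrm{d}z\,\mathrm{d}w$, a matching \emph{lower} bound for one jump from $\nu$ to compare this integral with $\mathbf{P}_\nu(n-7<\texttt{k})$, and then the (A1)-type estimate iterated a fixed number of times (together with a uniform lower bound on $\mathbf{P}_\nu(n_0<\texttt{k})$) to absorb the index shift from $n-7$ back to $n$. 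Your sketch does not engage with this shift, which is where the survival probabilities at different generations must be compared. So while the strategy is the right one and matches the paper, the two substantive verifications that constitute the proof are missing, and (A1) as you formulated it would need to be corrected to the conditional form before they can even be attempted.
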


\section{Classical existence of solution to \eqref{keval2}}\label{Sclassical}

Our objective here is to make rigorous the sense in which solving \eqref{prob} is consistent with solving the eigenvalue problem \eqref{keval2} in the classical sense.

\bigskip

We begin by considering the abstract Cauchy problem (ACP) on $L_2(D\times V)$,
\begin{equation}
\left\{
\begin{array}{rl}
\dfrac{\partial}{\partial t}u_t &= ({\mathcal T} + {\mathcal S})u_t
\\
u_0& = g.
\end{array}
\right.
\label{ACPgen}
\end{equation}

Then, just as in the spirit of Theorems \ref{thACP} and \ref{thm:co-semi}, it is not difficult to show that the operator $(\mathcal{T} + \mathcal{S}, {\rm Dom}(\mathcal{T} + \mathcal{S}))$ generates a unique solution to \eqref{ACPgen} via the $c_0$-semigroup  $(\mathcal{V}_t, t \ge 0)$ given by
\[
\mathcal{V}_t[g] \coloneqq {\rm exp}(t(\mathcal{T} + \mathcal{S}))g, 
\]
on $L_2(D\times V)$ (and hence for $g \in L_2(D\times V)$).
Moreover, we have that the expectation semigroup   $(\texttt{Q}_t,t \ge 0)$ agrees with $ ( \mathcal{V}_t, t\geq 0)$ on $L_2(D\times V)$, providing $g\in L^+_\infty(D\times V)$. This latter claim follows the same idea as the proof of Theorem 8.1 in \cite{MultiNTE}.

\bigskip

The equivalence of the semigroups $(\texttt{Q}_t,t \ge 0)$ and $ ( \mathcal{V}_t, t\geq 0)$ is what we will use to identify a classical (in the $L_2$-sense)  and probabilistic meaning to \eqref{keval2}. We start by showing the classical existence of a solution to  \eqref{keval2} on $L_2(D\times V)$. We note that this problem has been previously considered in~\cite{Mika, M-K}. In~\cite{Mika}, the author converted the criticality problem~\eqref{keval2} into a time-dependent problem in order to exploit the existing theory for time-dependent problems, whereas the methods used in~\cite[Section 5.11]{M-K} are similar to those presented in~\cite{MultiNTE}.
Another more restrictive version of assumption (H2) is needed, which also implies that (H3) holds:

\bigskip
{\color{black}
{\bf 
(H5): We have $\sigma_{\emph{\texttt{s}}}(r, \upsilon)\pi_{\emph{\texttt{s}}}(r, \upsilon, \upsilon') > 0$ and $\sigma_{\emph{\texttt{f}}}(r, \upsilon)\pi_{\emph{\texttt{f}}}(r, \upsilon, \upsilon') > 0$  on $D\times V\times V$.
} 
}

\begin{theorem}\label{c0eigen} 
Suppose that the cross sections $\sigma_{\emph{\texttt{f}}}\pi_{\emph{\texttt{f}}}$ and $\sigma_{\emph{\texttt{s}}}\pi_{\emph{\texttt{s}}}$ are piecewise continuous\footnote{A function is piecewise continuous if its domain can be divided into an exhaustive finite partition (e.g. polytopes) such that there is continuity in each element of the partition. This is precisely how cross sections are stored in numerical libraries for modelling of nuclear reactor cores.}. {\color{black}Further, assume that (H1) and  (H5) hold.}
Then there exist a real eigenvalue $k > 0$ and associated eigenfunction $\phi \in L_2^+(D\times V)$ such that~\eqref{keval2} holds on $L_2(D\times V)$. Moreover, $k$ can be explicitly identified as
\begin{equation}
k = \sup\left\{|\lambda | : (\mathcal{T} + \mathcal{S})\phi + \frac{1}{\lambda}\mathcal{F}\phi = 0 \text{ for some } \phi \in L_2(D \times V)\right\}.
\label{largest}
\end{equation}
\end{theorem}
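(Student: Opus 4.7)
The plan is to recast \eqref{keval2} as a spectral problem for a compact, positive integral operator on $L_2(D\times V)$ and then to apply the Krein--Rutman theorem. Since the semigroups $(\texttt{Q}_t,t\ge 0)$ and $(\mathcal{V}_t,t\ge 0)$ coincide on $L_2(D\times V)$ and the latter is generated by $(\mathcal{T}+\mathcal{S},\mathrm{Dom}(\mathcal{T}+\mathcal{S}))$, the operator
\[
K[g] := \int_0^\infty \texttt{Q}_s[\mathcal{F}g]\,\d s
\]
is the natural candidate for the realisation of $(-(\mathcal{T}+\mathcal{S}))^{-1}\mathcal{F}$ on $L_2(D\times V)$, and the pair $(k,\phi)$ will satisfy \eqref{keval2} if and only if $K[\phi]=k\phi$. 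Under (H1) and (H5), the killing rate $\sigma_{\texttt{f}}$ appearing in \eqref{T+S} is uniformly bounded below by a positive constant, which forces $\|\texttt{Q}_s\|_{L_2\to L_2}$ to decay exponentially in $s$; together with the boundedness of $\mathcal{F}$ on $L_2(D\times V)$ (again by (H1)), this yields that $K$ is a bounded linear operator, and its probabilistic expression makes it evident that $K(L_2^+(D\times V))\subseteq L_2^+(D\times V)$.

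The main technical step is to show that $K$ is compact on $L_2(D\times V)$; I expect this to be the principal obstacle. The piecewise continuity of $\sigma_{\texttt{f}}\pi_{\texttt{f}}$ and $\sigma_{\texttt{s}}\pi_{\texttt{s}}$ together with the boundedness of $D\times V$ should permit one to approximate $K$ in operator norm by Hilbert--Schmidt-type operators, obtained by restricting the $s$-integration to a bounded interval and mollifying the velocity kernel of $\mathcal{F}$. The combined smoothing effect of velocity averaging in $\mathcal{F}$ and of the transport/scattering dynamics of $\texttt{Q}_s$ is exactly what produces compactness in $L_2$; this mirrors the approach used in \cite[Section 5.11]{M-K}.

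Next, I would verify irreducibility. Under (H5) the fission kernel $\sigma_{\texttt{f}}\pi_{\texttt{f}}$ is strictly positive on $D\times V\times V$ and the $\sigma_{\texttt{s}}\pi_{\texttt{s}}$-NRW underlying $\texttt{Q}_s$ is, by the positivity of $\sigma_{\texttt{s}}\pi_{\texttt{s}}$ and a standard path-construction argument, able to carry mass to any open subset of $D\times V$ within a finite time with positive probability prior to exiting $D$. Consequently, $K[g]>0$ a.e.\ on $D\times V$ whenever $g\in L_2^+(D\times V)\setminus\{0\}$. The Krein--Rutman theorem applied to the compact, positive, irreducible operator $K$ then delivers a real, simple, largest-modulus eigenvalue $k:=r(K)>0$ and an associated eigenfunction $\phi\in L_2^+(D\times V)$ that is strictly positive a.e.

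It remains to translate $K\phi=k\phi$ back into \eqref{keval2}. Writing $\phi=k^{-1}\int_0^\infty \texttt{Q}_s[\mathcal{F}\phi]\,\d s$, the fact that $(\texttt{Q}_t,t\ge 0)=(\mathcal{V}_t,t\ge 0)$ is a $c_0$-semigroup on $L_2(D\times V)$ places the right-hand side in $\mathrm{Dom}(\mathcal{T}+\mathcal{S})$, and the resolvent identity $(\mathcal{T}+\mathcal{S})\int_0^\infty \texttt{Q}_s h\,\d s=-h$ applied to $h=k^{-1}\mathcal{F}\phi$ gives $(\mathcal{T}+\mathcal{S})\phi+k^{-1}\mathcal{F}\phi=0$, that is \eqref{keval2} on $L_2(D\times V)$. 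Finally, \eqref{largest} follows from the same inversion: any eigenpair $(\lambda,\phi_\lambda)$ of \eqref{keval2} in $L_2(D\times V)$ generates, via the resolvent of $-(\mathcal{T}+\mathcal{S})$, the identity $K[\phi_\lambda]=\lambda\phi_\lambda$, so $|\lambda|\leq r(K)=k$, with equality attained by $(k,\phi)$.
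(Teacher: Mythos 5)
Your overall route is the paper's own: invert $\mathcal{T}+\mathcal{S}$, recast \eqref{keval2} as the spectral problem for the positive operator $-(\mathcal{T}+\mathcal{S})^{-1}\mathcal{F}$ (your $K$ is exactly this operator, written probabilistically through $(\texttt{Q}_s,s\ge0)$), establish compactness and irreducibility, and apply Krein--Rutman, identifying $k$ as the spectral radius to get \eqref{largest}. There is, however, one genuine gap: you justify convergence of $\int_0^\infty \texttt{Q}_s[\mathcal{F}g]\,{\rm d}s$ (equivalently, existence of the resolvent of $\mathcal{T}+\mathcal{S}$ at $\lambda=0$) by asserting that (H1) and (H5) make $\sigma_{\texttt{f}}$ uniformly bounded below. (H5) only gives strict positivity pointwise on $D\times V\times V$; a uniform lower bound is precisely the stronger hypothesis (H3)$^*$, which is not among the assumptions of this theorem, and piecewise continuity does not repair this since the cross sections may degenerate towards the boundaries of the partition elements or of $D$. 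So that step, as stated, fails. The fact you need is nevertheless true, and the paper obtains it without touching $\sigma_{\texttt{f}}$: because velocities lie in $[\upsilon_{\texttt{min}},\upsilon_{\texttt{max}}]$ and $D$ is bounded, the advection semigroup is transient and $\Vert {\rm e}^{t\mathcal{T}}\Vert\le M_1{\rm e}^{-\omega t}$, while the conservative scattering contributes only a bounded factor, whence $\Vert\mathcal{V}_t\Vert\le M{\rm e}^{-\omega t}$ and the resolvent at $0$ is a bounded operator. Replacing your decay mechanism by this transience argument secures the boundedness of $K$.

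A more minor point: for a compact positive operator, Krein--Rutman needs $r(K)>0$ as an input; this is not automatic, and in the paper it is supplied by de Pagter's theorem, which applies because $-(\mathcal{T}+\mathcal{S})^{-1}\mathcal{F}$ is compact, positive and irreducible (properties the paper takes from the arguments of Proposition 9.1 of \cite{MultiNTE}, essentially the velocity-averaging compactness you sketch and defer). With de Pagter cited for strict positivity of the spectral radius, and compactness either proved along the lines you indicate or referenced as in the paper, the rest of your argument --- the resolvent identity translating $K\phi=k\phi$ back into \eqref{keval2}, and the maximality statement \eqref{largest} --- is sound and coincides with the paper's proof.
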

\begin{proof} 
We start by considering  a related eigenvalue problem. 
First recall from~\cite{MultiNTE} that, due to the transience of $\mathcal{T}$ on $D$, there exist constants $M_1, \omega > 0$ such that $\Vert {\rm e}^{t \mathcal{T}}\Vert \le M_1{\rm e}^{-\omega t}$ for each $t \ge 0$. Further, since $\mathcal{S}$ is conservative, there exists $M_2 > 0$ such that\footnote{We use the standard definition of operator norm, namely $\norm{A} =\sup_{\norm{f}_2\leq 1}
\norm{\mathcal{A}f}_2$, where, as before, $\norm{\cdot}_2$ is the usual norm on $L_2(D\times V)$.} 
$\Vert {\rm e}^{t \mathcal{S}}\Vert \le M_2$, $t \ge 0$. Hence $\Vert \mathcal{V}_t\Vert \le M{\rm e}^{-\omega t}, t \ge 0$, where $M = M_1M_2$. Then, classical semigroup theory~\cite{OpTh} gives the existence of the resolvent operator $(\lambda \mathcal{I} - (\mathcal{T} + \mathcal{S}))^{-1}$ for all $\lambda$ such that ${\rm Re}\lambda > -\omega$, where $\mathcal{I}$ is the identity operator on $L_2(D \times V)$. In particular, the resolvent is well-defined for $\lambda = 0$. Hence, the eigenvalue problem~\eqref{keval2} is equivalent to
\begin{equation}
-(\mathcal{T} + \mathcal{S})^{-1}\mathcal{F}\phi = k\phi.
\label{keval3}
\end{equation}
Due to the assumptions (H1) and (H5), almost identical arguments to those given in the proof of~\cite[Proposition 9.1]{MultiNTE} show that $-(\mathcal{T} + \mathcal{S})^{-1}\mathcal{F}$ is a positive, compact, irreducible operator. Concluding in the same way as the aforementioned proposition, de Pagter's theorem~\cite[Theorem 5.7]{M-K} implies that its spectral radius, $r(-(\mathcal{T} + \mathcal{S})^{-1}\mathcal{F})$, is strictly positive. It follows from the Krein-Rutman Theorem~\cite[Theorem 9.1]{MultiNTE} that $k \coloneqq r(-(\mathcal{T} + \mathcal{S})^{-1}\mathcal{F}) \coloneqq \sup\{|\lambda| : -(\mathcal{T} + \mathcal{S})^{-1}\mathcal{F}\phi = \lambda\phi \text{ for some } \phi \in L_2(D \times V)\}$ is the leading eigenvalue of the operator $-(\mathcal{T} + \mathcal{S})^{-1}\mathcal{F}$ with corresponding positive eigenfunction $\phi$.
\end{proof}

In a similar manner to~\cite{MultiNTE}, we are able to provide more information about the structure of the spectrum of the operator $-(\mathcal{T} + \mathcal{S})^{-1}\mathcal{F}$.

\begin{prop}\label{spectrum}
Under the assumptions of Theorem~\ref{c0eigen}, the part of the spectrum given by $\sigma(-(\mathcal{T} + \mathcal{S})^{-1}\mathcal{F}) \cap \{\lambda : {\rm Re}(\lambda) > 0\}$ consists of finitely many eigenvalues with finite multiplicities. In particular, $k$ is both algebraically and geometrically simple\footnote{An eigenvalue $\lambda$ associated with an operator $A$ is geometrically simple if dim$({\rm ker}(\lambda I - A)) = 1$ and algebraically simple if $\sup_{k\ge 1}{\rm dim}({\rm ker}(\lambda I - A)^k) = 1$}.
\end{prop}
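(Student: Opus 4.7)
The plan is to exploit the fact, already established in the proof of Theorem \ref{c0eigen}, that $A := -(\mathcal{T}+\mathcal{S})^{-1}\mathcal{F}$ is a positive, compact, irreducible operator on $L_2(D\times V)$ with strictly positive spectral radius $k$. Once this is in hand, the proposition is a fairly direct consequence of Riesz--Schauder spectral theory for compact operators together with the Krein--Rutman theorem, mirroring the strategy used in \cite{MultiNTE} for the analogous result about $\lambda_*$.

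First I would invoke the Riesz--Schauder theorem: for the compact operator $A$, the spectrum $\sigma(A)$ is at most countable, each nonzero spectral value is an eigenvalue of finite algebraic multiplicity, and the only possible accumulation point of $\sigma(A)$ is $0$. From this, the set $\sigma(A)\cap\{\lambda:\mathrm{Re}(\lambda)>0\}$ is bounded (since all of $\sigma(A)$ lies in the disc of radius $k$) and bounded away from $0$ on any closed half-plane $\{\mathrm{Re}(\lambda)\ge\varepsilon\}$; hence it contains no accumulation point, and being a bounded subset of a countable set with no limit points, must be finite. Each such eigenvalue automatically has finite algebraic (and thus geometric) multiplicity by the Riesz--Schauder statement, giving the first assertion.

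Next, for the simplicity of $k$, I would apply the sharper form of the Krein--Rutman theorem for strongly positive (or irreducible) compact operators, exactly as in the corresponding proposition of \cite{MultiNTE}. Irreducibility of $A$, which was already used in the proof of Theorem \ref{c0eigen}, ensures that the eigenfunction $\phi$ associated with $k$ is strictly positive almost everywhere (up to the natural boundary conditions), and that $k$ is a geometrically simple eigenvalue: if $A\phi'=k\phi'$ for some other $\phi'\in L_2(D\times V)$, the positivity/irreducibility argument forces $\phi'$ to be a scalar multiple of $\phi$. To upgrade geometric to algebraic simplicity, I would show that the generalized eigenspace $\bigcup_{n\ge 1}\ker(kI-A)^n$ also reduces to the line spanned by $\phi$: assuming $(kI-A)\psi=c\phi$ with $c\ne 0$, a pairing of $\psi$ against the left Perron eigenfunction $\tilde{\phi}>0$ of the dual operator $A^*$ (whose existence follows from Krein--Rutman applied to $A^*$, since $A^*$ is also compact, positive and irreducible on the dual space) yields the contradiction $0=\langle\tilde{\phi},(kI-A)\psi\rangle=c\langle\tilde{\phi},\phi\rangle>0$.

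The step I expect to require the most care is establishing (or citing cleanly) the existence of a strictly positive left eigenfunction $\tilde{\phi}$ for $A^*$ associated with the same eigenvalue $k$, since this relies on verifying that the compactness, positivity and irreducibility properties transfer to the adjoint. In the present framework this should go through essentially as in \cite{MultiNTE}, because $\mathcal{T}^*+\mathcal{S}^*$ generates the backward semigroup associated with the time-reversed neutron random walk and $\mathcal{F}^*$ is a positive integral operator of the same form as $\mathcal{F}$, so the arguments behind compactness and irreducibility can simply be repeated on the dual side; everything else is a routine transcription of the Krein--Rutman / Riesz--Schauder package.
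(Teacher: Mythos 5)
Your treatment of the simplicity of $k$ is sound and is essentially the paper's route: the paper also passes to the adjoint problem $\mathcal{F}^\top(\mathcal{T}^\top+\mathcal{S}^\top)^{-1}\phi^\top=k\phi^\top$, identifies its leading eigenvalue with $k$, and then invokes positivity arguments (citing Vidav and Dautray--Lions) for geometric simplicity and algebraic simplicity; your explicit pairing of a generalized eigenvector against the non-negative dual eigenfunction $\tilde\phi$, giving $0=\langle\tilde\phi,(kI-A)\psi\rangle=c\langle\tilde\phi,\phi\rangle>0$, is a correct and more self-contained version of that step, provided you justify (as you indicate) that $A^*$ is compact, positive and irreducible so that a non-negative eigenfunction for the eigenvalue $k$ exists and pairs strictly positively with $\phi$.

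The genuine gap is in the first assertion. Riesz--Schauder theory alone does not give finiteness of $\sigma(A)\cap\{\lambda:\mathrm{Re}(\lambda)>0\}$: compactness only forbids accumulation of nonzero eigenvalues away from $0$, and eigenvalues with strictly positive real part may perfectly well accumulate at $0$, which lies \emph{outside} the open half-plane. Your concluding step --- ``it contains no accumulation point, and being a bounded subset of a countable set with no limit points, must be finite'' --- is invalid as stated: the set $\{1/n:n\ge 1\}$ is bounded, countable and contains none of its limit points, yet is infinite, and a compact positive operator can indeed have spectrum of exactly this form. What your argument actually yields is finiteness in each closed half-plane $\{\mathrm{Re}(\lambda)\ge\varepsilon\}$, $\varepsilon>0$, which does not pass to the union over $\varepsilon$. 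This is precisely the point where the paper brings in extra machinery: it studies the invertibility of $\mathcal{I}+\lambda^{-1}(\mathcal{T}+\mathcal{S})^{-1}\mathcal{F}$ as an analytic family of compact perturbations and appeals to the Gohberg--Shmulyan theorem to conclude that the inverse exists except at a finite set of degenerate poles in the region of interest. To repair your argument you would need either to invoke such an analytic-Fredholm/Gohberg--Shmulyan type result, or to supply a separate argument excluding accumulation at $0$ from within the right half-plane; neither follows from the Riesz--Schauder package you cite.
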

\begin{proof}
We follow the idea of the proof of~\cite[Theorem 4.13]{M-K} and consider the invertibility of the operator $\lambda\mathcal{I} + (\mathcal{T} + \mathcal{S})^{-1}\mathcal{F}$ by considering the following problem,
\[
\left(\mathcal{I} + \frac{1}{\lambda}(\mathcal{T} + \mathcal{S})^{-1}\mathcal{F}\right)f = \frac{1}{\lambda}g,
\]
for $\lambda \in \sigma(-(\mathcal{T} + \mathcal{S})^{-1}\mathcal{F}) \cap \{\lambda : {\rm Re}(\lambda) > 0\}$. Note that this latter set is non-empty on account of the previous theorem.

\bigskip

As stated in the proof of Theorem~\ref{c0eigen}, the operator $-{\lambda}^{-1}(\mathcal{T} + \mathcal{S})^{-1}\mathcal{F}$ is compact in $L_2(D \times V)$ so that by Gohberg-Shmulyan's Theorem~\cite{RV}, $\left(\mathcal{I} + {\lambda}^{-1}(\mathcal{T} + \mathcal{S})^{-1}\mathcal{F}\right)^{-1}$ exists except for a finite set of discrete degenerate poles. This implies that $\left(\lambda\mathcal{I} + (\mathcal{T} + \mathcal{S})^{-1}\mathcal{F}\right)^{-1}, \lambda \in \sigma(-(\mathcal{T} + \mathcal{S})^{-1}\mathcal{F}) \cap \{\lambda : {\rm Re}(\lambda) > 0\}$ exists except for a finite set of eigenvalues with finite multiplicities. 

\medskip 

We now prove that $k$ is a simple eigenvalue of the operator $-(\mathcal{T} + \mathcal{S})^{-1}\mathcal{F}$. In order to do so, we need to consider the adjoint eigenvalue problem
\begin{equation}
\mathcal{F}^\top({\mathcal T}^\top + \mathcal{S}^\top)^{-1}\phi^\top = k^\top\phi^\top,
\label{adjointeval}
\end{equation}
where ${\mathcal T}^\top$ denotes the adjoint of ${\mathcal T}$, with similar definitions for $\mathcal{F}^\top$ and $\mathcal{S}^\top$.

\bigskip

We first note that, since the operator $\mathcal{T}^\top + \mathcal{S}^\top$ enjoys similar properties to $\mathcal{T} + \mathcal{S}$, the same methods as those given in the proof of Theorem~\ref{L2soln} apply to give existence of a leading eigenvalue $k^\top$ and corresponding eigenfunction $\phi^\top$. Now, due to~\cite[p.184]{Kato}, if $\lambda$ is an isolated eigenvalue of $-(\mathcal{T} + \mathcal{S})^{-1}\mathcal{F}$, then its complex conjugate, $\bar\lambda$, is an isolated eigenvalue of the adjoint of $-(\mathcal{T} + \mathcal{S})^{-1}\mathcal{F}$ with the same multiplicity. Equivalently, for each isolated $\lambda$ solving~\eqref{keval2} with eigenfunction $\phi$, $\bar\lambda$ solves~\eqref{adjointeval} with a corresponding eigenfunction $\phi^\top$ and has the same multiplicity as $\lambda$. In particular, since $k$ is real, it follows that the leading eigenvalue associated with~\eqref{adjointeval} is also $k$. These observations along with similar arguments to those presented in \cite[Theorem 7(iii)]{DL6} and~\cite{Vidav} yield geometric simplicity of $k$. Then straightforward adaptations of the arguments in~\cite[Remark 12]{DL6} yield algebraic simplicity.
\end{proof}

The next result shows that if we can find a solution to \eqref{keval2}, then it must necessarily agree with the eigensolution constructed in Theorem \ref{CVtheorem} on $L_2(D\times V)$.

\begin{theorem}\label{L2soln}Suppose the assumptions of Theorem~\ref{c0eigen} are in force\footnote{Note that these assumptions imply those required for Theorem~\ref{CVtheorem}.}, that  $(k_*, \phi_*)$ solves \eqref{prob} and $(k, \phi)$ denotes the leading eigensolution  to \eqref{keval2}. Then $k = k_*$, and, up to a positive constant multiple, $\phi$ agrees with $\phi_*$ on $L_2(D\times V)$. 
\end{theorem}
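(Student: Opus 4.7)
The plan is to turn the pointwise identity \eqref{prob} satisfied by $(k_*,\phi_*)$ into the $L_2$ identity \eqref{keval2}, making rigorous the heuristic sketched at the end of Section~2, and then invoke the spectral information from Proposition~\ref{spectrum} to pin down the pair up to a positive scalar. By Theorem~\ref{CVtheorem}, $\phi_* \in L_\infty^+(D\times V)$, and since $D$ is bounded and $V$ is a compact spherical shell we also have $\phi_* \in L_2^+(D\times V)$ and, by (H1), $\mathcal{F}\phi_* \in L_2^+(D\times V)$. Applying \eqref{mildNTE} with $n=1$, $g=\phi_*$ and using $\Psi_1[\phi_*]=k_*\phi_*$ yields, pointwise on $D\times V$ and hence in $L_2(D\times V)$,
\[
k_*\phi_*(r,\upsilon) \;=\; \int_0^\infty \texttt{Q}_s[\mathcal{F}\phi_*](r,\upsilon)\,\mathrm{d}s .
\]

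The discussion following \eqref{ACPgen} identifies $(\texttt{Q}_s)_{s\ge 0}$ with the $c_0$-semigroup $(\mathcal{V}_s)_{s\ge 0}=(\exp(s(\mathcal{T}+\mathcal{S})))_{s\ge 0}$ on $L_2(D\times V)$, and the exponential stability estimate $\|\mathcal{V}_s\|\le M{\rm e}^{-\omega s}$ recalled in the proof of Theorem~\ref{c0eigen} guarantees that $0$ lies in the resolvent set of $\mathcal{T}+\mathcal{S}$ with
\[
-(\mathcal{T}+\mathcal{S})^{-1} \;=\; \int_0^\infty \mathcal{V}_s\,\mathrm{d}s,
\]
the integral converging in operator norm (the classical resolvent formula for exponentially stable $c_0$-semigroups). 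Combining this with the previous display (the interchange being legitimate because $\|\mathcal{V}_s[\mathcal{F}\phi_*]\|_2 \le M{\rm e}^{-\omega s}\|\mathcal{F}\phi_*\|_2$) yields the $L_2$ identity $k_*\phi_* = -(\mathcal{T}+\mathcal{S})^{-1}\mathcal{F}\phi_*$, which is equivalent to $(\mathcal{T}+\mathcal{S})\phi_* + k_*^{-1}\mathcal{F}\phi_* = 0$ in $L_2(D\times V)$. Hence $(k_*,\phi_*)$ is a positive eigenpair for \eqref{keval2} on $L_2(D\times V)$.

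To conclude, the proof of Theorem~\ref{c0eigen} establishes that $-(\mathcal{T}+\mathcal{S})^{-1}\mathcal{F}$ is a positive, compact, irreducible operator on $L_2(D\times V)$. By the Krein--Rutman theorem (as invoked there, together with Proposition~\ref{spectrum}), its spectral radius $k$ is the unique eigenvalue admitting a positive eigenfunction, and the associated eigenspace is one-dimensional. Since $\phi_*$ is strictly positive, this forces $k_*=k$ and $\phi_* = c\,\phi$ in $L_2(D\times V)$ for some $c>0$. The main subtlety in the argument is the passage from the pointwise mild identity to the $L_2$ resolvent identity in the second paragraph -- one needs both the semigroup identification $\texttt{Q}_s=\mathcal{V}_s$ on $L_2$ and a careful Fubini/dominated-convergence justification using exponential stability -- while the closing uniqueness step is a routine consequence of the irreducibility already verified in Theorem~\ref{c0eigen}.
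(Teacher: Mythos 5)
Your proposal is correct and follows the same overall strategy as the paper: use \eqref{mildNTE} with $n=1$ together with $\Psi_1[\phi_*]=k_*\phi_*$ to get $\int_0^\infty \texttt{Q}_s[\mathcal{F}\phi_*]\,\mathrm{d}s = k_*\phi_*$, transfer this to $L_2$ via the identification of $(\texttt{Q}_s)_{s\ge0}$ with $(\mathcal{V}_s)_{s\ge0}$, and conclude that $(k_*,\phi_*)$ solves \eqref{keval2}. The only differences are in execution. For the first half, you invert $\mathcal{T}+\mathcal{S}$ directly through the resolvent formula $-(\mathcal{T}+\mathcal{S})^{-1}=\int_0^\infty \mathcal{V}_s\,\mathrm{d}s$ (justified by exponential stability), whereas the paper applies the integrated generator identity $\mathcal{V}_t[\mathcal{F}g]=(\mathcal{T}+\mathcal{S})\int_0^t\mathcal{V}_s[\mathcal{F}g]\,\mathrm{d}s+\mathcal{F}g$ and lets $t\to\infty$ using transience; these are equivalent, and your route is arguably cleaner. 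For the uniqueness step, you invoke the strong (irreducible) form of the Krein--Rutman theorem as a black box --- that for a positive, compact, irreducible operator the spectral radius is the only eigenvalue with a nonnegative eigenvector and its eigenspace is one-dimensional --- while the paper derives $k=k_*$ by hand, pairing $\phi_*$ against the positive adjoint eigenfunction $\phi^\top$ of \eqref{adjointeval} to get $(k-k_*)\langle\phi^\top,\phi_*\rangle=0$, and then uses the simplicity established in Proposition~\ref{spectrum} to identify $\phi$ with $\phi_*$ up to a constant. Your citation is of a standard result and is valid under the irreducibility and compactness established in Theorem~\ref{c0eigen}, but note that the basic Krein--Rutman statement the paper cites does not by itself give uniqueness of the eigenvalue admitting a positive eigenfunction; the paper's adjoint computation is precisely the self-contained substitute for that stronger statement, so if you prefer not to import the Banach-lattice version you should reproduce that duality argument.
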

\begin{proof}
Recall the semigroup, $(\mathcal{V}_t)_{t \ge 0}$, generated by $\mathcal{T} + \mathcal{S}$ and note that, due to boundedness of the operator $\mathcal{F}$, if $g \in L_p(D\times V)$, then  $\mathcal{F}g \in L_p(D\times V)$, $p \in [1, \infty]$. Thanks to~\cite[Chapter II, Lemma 1.3]{EN}, $(\mathcal{V}_t)_{t \ge 0}$ satisfies
\begin{equation}
\mathcal{V}_t[\mathcal{F}g] = (\mathcal{T} + \mathcal{S})\int_0^t\mathcal{V}_s[\mathcal{F}g]{\rm d}s + \mathcal{F}g.
\label{eq1}
\end{equation}
Letting $t \to \infty$ in the above equation, we obtain
\begin{equation}
0 = (\mathcal{T} + \mathcal{S})\int_0^\infty\mathcal{V}_s[\mathcal{F}g]{\rm d}s + \mathcal{F}g,
\label{eq2}
\end{equation}
which follows from the fact that $(\mathcal{T} + \mathcal{S})$ is a transient operator so that $\lim_{t \to\infty}\mathcal{V}_t[g] = 0$. Setting $g = \phi_*$ in~\eqref{eq2} and using the fact that $(\texttt{Q}_s, s\geq 0)$ and $(\mathcal{V}_s, s\geq 0)$ agree on $L_2(D\times V)$,  providing $g\in L^+_\infty(D\times V)$, yields
\begin{equation}
0 = (\mathcal{T} + \mathcal{S})\int_0^\infty\texttt{Q}_s[\mathcal{F}\phi_*]{\rm d}s + \mathcal{F}\phi_*.
\label{eq3}
\end{equation}
Now taking advantage of \eqref{prob} for $\phi_*$, noting in particular \eqref{mildNTE}, we have
\begin{equation}
\int_0^\infty\texttt{Q}_s[\mathcal{F}\phi_*] = \Psi_1[\phi_*] = k_*\phi_*.
\label{eq4}
\end{equation}
Substituting this into~\eqref{eq3} shows that $(k_*, \phi_*)$ is a solution to~\eqref{keval2} on $L_2(D\times V)$.

\bigskip

To conclude the proof, we first show that $k_* = k$. Again, consider the adjoint problem~\eqref{adjointeval} and note that
\begin{align*}
0 &= \langle (\mathcal{T} + \mathcal{S})^{-1}\mathcal{F}\phi_*, \phi^\top\rangle -  \langle \mathcal{F}^\top(\mathcal{T}^\top + \mathcal{S}^\top)^{-1}\phi^\top, \phi_*\rangle \\
&= (k-k_*)\langle \phi^\top, \phi_*\rangle.
\end{align*}
Since $\phi_*$ and $\phi^\top$ are positive, we must have $k_* = k$. Due to simplicity of $k$ from the previous proposition, it follows that $\phi = \phi_*$ up to a multiplicative constant.
\end{proof}

\section{Proof of Theorem~\ref{CVtheorem}}\label{longproof}
As previously stated, our methods of proving Theorem~\ref{CVtheorem} are motivated by those used in~\cite{SNTE, CV}. The main part of the proof comes from~\cite[Theorem 2.1]{CV}, which we restate (in the language of the desired application) here for convenience. To this end, recalling the notation in \eqref{daggerdef}, define 
\[
\texttt{k} = \Gamma\wedge \min\{n\geq 1: T_n\geq \kappa^D\}.
\]
\begin{theorem}\label{7CVtheoremBis} {\color{black} Suppose that (H1), (H3)$^*$ and (H4) are in force.}
Suppose that there exists a probability measure $\nu$ on $D\times V$ such that
\begin{enumerate}
\item[\namedlabel{itm:A1}{(A1)}] there exist $n_0$, $c_1 > 0$ such that for each $(r, \upsilon)\in D\times V$,
\[
\mathbf{P}_{(r, \upsilon)}((R_{T_{n_0}}, \Upsilon_{T_{n_0}}) \in \cdot \;| n_0 < \texttt{k}) \ge c_1 \nu(\cdot);
\]
\item[\namedlabel{itm:A2}{(A2)}] there exists a constant $c_2 > 0$ such that for each $(r, \upsilon)\in D\times V$ and for every $n\ge 0$,
\[
\mathbf{P}_{\nu}(n < \texttt{k}) \ge c_2\mathbf{P}_{(r, \upsilon)}(n <  \texttt{k}).
\]
\end{enumerate}
Then, there exists $k_c \in(0, 1)$ such that, there exist an
eigenmeasure $\eta$ on $D\times V$ and a positive right eigenfunction
$\varphi$ of $\Psi_n^\dagger$ (defined in~\eqref{daggerdef})
with eigenvalue
$k_c^n$, such that $\eta$ is a probability
measure and $\varphi\in L^+_\infty(D\times V) $, i.e. for all
$g\in L_{\infty}(D\times V)$,
\begin{equation}
\eta [\Psi^{\dagger}_n[g] ]= k_c^n\eta[g]\quad  \text{and}\quad 
\Psi^{\dagger}_n[\varphi] = k_c^n\varphi
 \quad n\ge 0.
 \label{eta}
\end{equation}
Moreover, there exist $C,\gamma>0$ such that, for $g\in L^+_\infty(D\times V)$ and $n$ sufficiently large (independently of $g$),
\begin{equation}
\left\| k_c^{-n}\varphi^{-1}\Psi_n^{\dagger}[g]-\eta[g]\right\|_\infty\leq C\gamma^{- n}\|g\|_\infty.
\label{7spectralexpsgp}
\end{equation}
In particular, setting $g \equiv 1$, as $n\to\infty$,
\begin{equation}
\left\| k_c^{-n} \varphi^{-1}\mathbf{P}_{\cdot}(n < \texttt{k}) - 1\right\|_\infty\leq C\gamma^{- n}.
\label{7die}
\end{equation}
\end{theorem}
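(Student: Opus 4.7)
The proof is essentially a direct invocation of the abstract Champagnat-Villemonais result \cite[Theorem 2.1]{CV}, after reinterpreting $(\Psi_n^\dagger, n\ge 0)$ as the sub-Markov transition semigroup of a discrete-time killed Markov chain on $D\times V$. The plan is to unwind the construction in Section \ref{QSD} so that \eqref{daggerdef} takes the standard form to which \cite{CV} applies, and then to read off the four claims \eqref{eta}, \eqref{7spectralexpsgp}, and \eqref{7die} directly from their abstract counterparts.

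First I would make the probabilistic reinterpretation explicit. By the strong Markov property of the $\alpha\pi$-NRW and the independence of the indicator families $(\texttt{I}_k)_{k\ge 1}$ and $(\texttt{K}_n)_{n\ge 0}$ from the past trajectory (this independence being built into the construction of the NGP), the embedded sequence $Y_n := (R_{T_n}, \Upsilon_{T_n})$, $n\ge 0$, is a discrete-time Markov chain on $D\times V$. The random index $\texttt{k} = \Gamma \wedge \min\{n\ge 1: T_n\ge \kappa^D\}$ is the first time either the NRW exits $D$ before the $n$-th fission-type scatter or one of the $\texttt{K}_n$'s flips to $1$; hence $\texttt{k}$ serves as the lifetime of $Y$. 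The last line of \eqref{daggerdef} then reads $\Psi_n^\dagger[g](r,\upsilon) = \mathbf{E}_{(r,\upsilon)}\bigl[g(Y_n)\mathbf{1}_{\{n<\texttt{k}\}}\bigr]$, which exhibits $\Psi_n^\dagger$ as precisely the sub-Markov semigroup of a killed Markov chain in the sense of \cite{CV}.

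Second, under this reinterpretation, hypotheses \ref{itm:A1} and \ref{itm:A2} are word-for-word the two hypotheses of \cite[Theorem 2.1]{CV} (the Doeblin-type minorization at step $n_0$, and the comparison of survival tails from an arbitrary initial point with those starting from the minorizing measure $\nu$). Invoking that theorem directly gives a constant $k_c\in(0,1)$, a probability eigenmeasure $\eta$ on $D\times V$, and a positive bounded right eigenfunction $\varphi\in L^+_\infty(D\times V)$ satisfying the eigenrelations \eqref{eta} with eigenvalue $k_c^n$, together with the uniform exponential convergence \eqref{7spectralexpsgp}. Finally, taking $g\equiv 1$ and using $\Psi_n^\dagger[1](r,\upsilon) = \mathbf{P}_{(r,\upsilon)}(n<\texttt{k})$ together with $\eta[1]=1$ immediately yields \eqref{7die}.

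The genuine content of the argument is therefore already packaged inside \cite[Theorem 2.1]{CV}, and no hard step remains at the level of Theorem \ref{7CVtheoremBis}. The only subtlety I would flag is the Markov chain identification: one must confirm that the coin flips $(\texttt{I}_k, \texttt{K}_n)$ used at the $n$-th fission-type scatter are independent of $\sigma(Y_0, \dots, Y_{n-1})$, but this is a direct consequence of how these indicators were introduced in Section \ref{QSD}. The substantive work in the paper lies not here but in verifying \ref{itm:A1} and \ref{itm:A2} under (H1), (H3)$^*$ and (H4); that verification, which is the real obstacle for deducing Theorem \ref{CVtheorem}, will be carried out separately and is not part of the present statement.
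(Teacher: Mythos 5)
Your proposal is correct and matches the paper's own treatment: the paper presents Theorem \ref{7CVtheoremBis} explicitly as a restatement of \cite[Theorem 2.1]{CV} applied to the sub-Markov semigroup $\Psi^\dagger_n$ of the chain $(R_{T_n},\Upsilon_{T_n})$ killed at time $\texttt{k}$, which is exactly the reduction you carry out, with \eqref{7die} obtained by setting $g\equiv 1$. The only difference is organisational: the paper folds the verification of \ref{itm:A1} and \ref{itm:A2} under (H1), (H3)$^*$, (H4) into the same section (as the substantive step needed for Theorem \ref{CVtheorem}), whereas you correctly observe that, for the conditional statement as written, that verification is separate work.
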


\bigskip
It is then straightforward to conclude that $\eta$ and $\varphi$ are the left eigenmeasure and right eigenfunction corresponding to the eigenvalue $k_* = k_cN_{\texttt{max}}$ for the semigroup $\Psi_n$. 

\bigskip
%
%
%

We now proceed to the proof of Theorem~\ref{7CVtheoremBis}. We will use the notation $J_k$ to denote the $k^{th}$ scatter event of the random walk $(R, \Upsilon)$ under ${\bf P}^\dagger$ and {\color{black} recall that  $T_k$  denotes the scatter event that corresponds to the $k^{th}$ fission event in the original NBP.  The basis of our proof relies on the fact that, for each $k\geq 1$, $T_k = J_k$ with positive probability. }

\bigskip

A fundamental part of the proof of~\ref{itm:A1} and~\ref{itm:A2} is the following lemma. We refer the reader to~\cite[Lemma 7.3]{SNTE} for its proof.

\begin{lem}\label{jumps} {\color{black} Under the assumptions of Theorem \ref{7CVtheoremBis}, }
for all $r\in D$ and $\upsilon \in V$, we have
\begin{equation}
\mathbf{P}_{(r, \upsilon)}^\dagger(J_7 < \emph{\texttt{k}}, R_{J_7} \in {\rm d}z) \le C\mathbf{1}_{(z\in D)}\,{\rm d}z,
\label{jump7}
\end{equation}
for some constant $C > 0$, and
\begin{equation}
\mathbf{P}^\dagger_{\nu}(J_1 <\emph{\texttt{k}}, R_{J_1} \in {\rm d}z) \ge c\mathbf{1}_{(z\in D)}\,{\rm d}z,
\label{jump1}
\end{equation}
for another constant $c>0$, where $\nu$ is Lebesgue measure on $D\times V$.
\end{lem}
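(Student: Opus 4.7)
My approach rests on the deterministic representation, valid on the event $\{J_n < \texttt{k}\}$,
\[
R_{J_n} = r + \upsilon\,\tau_1 + \sum_{i=2}^{n}\Upsilon_{J_{i-1}}\,\tau_i, \qquad \tau_i := J_i - J_{i-1},
\]
combined with the fact that, under (H1) and (H4), the inter-scatter holding times $\tau_i$ have densities bounded above (and, on any fixed compact sub-interval, bounded below), while, under (H3)$^*$, the post-scatter velocity densities on $V$ are bounded above and bounded below on the fission component. The event $\{J_n < \texttt{k}\}$ decomposes into (i) survival of the trajectory inside $D$ and (ii) survival of the i.i.d.\ $\texttt{K}_k$ coin flips, and the plan is to condition on enough variables that both events become measurable without destroying absolute continuity.

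For the upper bound \eqref{jump7}, I would exchange three of the inter-scatter times for the three coordinates of $R_{J_7}$: since $D\subset\mathbb{R}^3$, this is possible as long as the three corresponding velocities are linearly independent. Concretely, condition on the scatter velocities $\upsilon,\Upsilon_{J_1},\ldots,\Upsilon_{J_6}$ and four of the seven $\tau_i$'s, and then change variables from the remaining triple $(\tau_{i_1},\tau_{i_2},\tau_{i_3})$ to $R_{J_7}$; the Jacobian is $|\det M|^{-1}$ where $M$ is the $3\times 3$ matrix whose columns are $\Upsilon_{J_{i_1-1}},\Upsilon_{J_{i_2-1}},\Upsilon_{J_{i_3-1}}$. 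Summing over all $\binom{7}{3}$ choices of triple and using that, with uniform positive probability, at least one such triple has $|\det M|$ bounded away from zero (this is where (H3)$^*$ and the bounded-below fission-velocity density enter) yields the required uniform upper bound $C$. The reason for taking $n=7$ rather than $n=3$ is to provide enough velocity samples to guarantee such a non-degenerate triple while simultaneously allowing the path to remain in $D$ and survive the $\texttt{K}_k$ coin flips.

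The lower bound \eqref{jump1} is considerably shorter. Under $\nu=$ Lebesgue on $D\times V$ the initial $(R_0,\Upsilon_0)$ already has positive density, and $R_{J_1}=R_0+\Upsilon_0\,\tau_1$, so changing variables from $R_0$ to $R_{J_1}$ at fixed $(\Upsilon_0,\tau_1)$ (Jacobian $1$) yields a density bounded below on $D$: convexity of $D$ ensures every $z\in D$ is reachable by a positive-Lebesgue-measure set of triples $(R_0,\Upsilon_0,\tau_1)$ whose linear trajectories stay in $D$ up to $J_1$, (H1) supplies a uniform lower bound on the density of $\tau_1$ on some compact sub-interval, and (H3)$^*$, (H4) together keep the $\texttt{K}_1$-survival probability uniformly positive. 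The hardest step is therefore the upper bound, where one must simultaneously rule out the pathway in which the seven sampled velocities lie near a common plane, keep the particle inside the bounded convex domain through seven scatters, and ensure the i.i.d.\ $\texttt{K}_k$ coins are survived; the $\binom{7}{3}$-triples argument combined with the uniform positivity in (H1), (H3)$^*$ and (H4) is what makes all three of these compatible, paralleling the construction in \cite[Lemma 7.3]{SNTE}.
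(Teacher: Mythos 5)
First, note that the paper itself does not prove this lemma; it defers entirely to \cite[Lemma 7.3]{SNTE}, where the bound is obtained by explicitly estimating the density of the position of the walk after successive scatters (convolving kernels with an $|z-r|^{-2}$-type singularity until boundedness is reached). So the comparison here is against that cited argument rather than anything in the present text.

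Your lower bound \eqref{jump1} is essentially right, and is in the spirit of the cited proof: starting from Lebesgue measure, one scatter suffices, with the change of variables $R_0\mapsto R_{J_1}$ at fixed $(\Upsilon_0,\tau_1)$ having unit Jacobian. Two small repairs are needed: the lower bound on the holding-time density uses $\alpha$ bounded away from $0$, which comes from (H3)$^*$, not (H1); and ``every $z\in D$ is reachable by a positive-measure set of triples'' must be made uniform in $z$ as $z\to\partial D$ (an interior-cone argument for the smooth bounded convex $D$: a fixed-aperture cone of directions $-\upsilon_0$ keeps $z-\upsilon_0 t\in D$ for $t$ in an interval of uniformly positive length).

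The upper bound \eqref{jump7} is where there is a genuine gap. Having conditioned on the velocities and four of the holding times and changed variables from a triple $(\tau_{i_1},\tau_{i_2},\tau_{i_3})$ to $R_{J_7}$, the density of $R_{J_7}$ at $z$ is an \emph{integral} of $|\det M|^{-1}$ against the law of the conditioning variables. The statement that ``with uniform positive probability at least one triple has $|\det M|$ bounded away from zero'' controls only the good event and says nothing about the complementary event on which \emph{every} triple of the seven velocities is nearly coplanar; those configurations have positive probability and there the Jacobian factor blows up, so they cannot simply be discarded when proving an upper bound on a density. (Lower bounds on the velocity densities, which you invoke via (H3)$^*$ at this point, do not help: an upper bound on a density needs upper bounds, and the dangerous configurations are not excluded by positivity.) Indeed, for a single fixed triple the bound $\mathbf{P}(|\det M|\le\epsilon)\lesssim\epsilon$ only gives $\mathbf{E}\bigl[|\det M|^{-1}\bigr]=\infty$ (logarithmic divergence), so the fixed-triple change of variables alone does not yield a bounded density. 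What is missing is a quantitative treatment of the degenerate set, e.g.\ a bound of the form $\mathbf{P}\bigl(\max_{\text{triples}}|\det M|\le\epsilon\bigr)\lesssim\epsilon^{5}$ (all seven velocities within $\epsilon$ of a common plane) together with a dyadic decomposition over the size of the best minor, on each piece performing the change of variables with the triple achieving it; the resulting series $\sum_j 2^{j}2^{-5j}$ converges and gives the uniform constant $C$. Alternatively one can follow \cite[Lemma 7.3]{SNTE} and compute the successive position densities directly, which quantifies exactly this near-degeneracy. As written, your argument asserts the conclusion precisely at the point where the work has to be done.
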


\begin{proof}[Proof of~\ref{itm:A1}]
In order to prove~\ref{itm:A1}, we use similar arguments to those presented in the proof of~\eqref{jump1}. To this end, fix $r_0 \in D$ and suppose $\Upsilon_0$ is uniformly distributed on $V$. Then, due to the assumptions {\color{black} (H1) and (H3)$^*$}, the techniques used in~\cite{SNTE} to prove~\eqref{jump1} yield
\begin{equation}
\mathbf{E}_{(r_0, \Upsilon_0)}\left[ f(R_{J_1})\mathbf{1}_{(T_1 = J_1)}\right] \ge C_0 \int_D {\rm d}z \mathbf{1}_{([r, z] \subset D)} f(z).
\label{LB1}
\end{equation}
Recall the (deterministic) quantity $\kappa_{r_0, \upsilon_0}^D$, for $r_0 \in D$, $\upsilon_0 \in V$, defined in Theorem \ref{mild}. Also note that due to (H3)$^*$, $\pi$ is bounded below by a constant (see discussion just before Lemma 7.2 of~\cite{SNTE}). Using this, along with the strong Markov property, (H1) and \eqref{LB1}, we have
\begin{align}
\mathbb{E}_{(r_0, \upsilon_0)}^\dagger[f(R_{T_2}, \Upsilon_{T_2})\mathds{1}_{(T_2 = J_2)}] & \ge C_1\int_0^{\kappa_{r_0, \upsilon_0}^D}{\rm d}s{\rm e}^{-\bar\alpha s}\underline{\pi}\int_V {\rm d}\upsilon_1\mathbf{E}_{(r_0 + \upsilon_0 s, \upsilon_1)}^\dagger[f(R_{J_1}, \Upsilon_{J_1})\mathds{1}_{(T_1 = J_1)}]\notag\\
& \ge C_2 \kappa_{r_0, \upsilon_0}^D\int_D{\rm d}r\int_V{\rm d}\upsilon f(r, \upsilon) \label{LB2}.
\end{align}
Finally, we note that due to (H1) and (H3)$^*$,
\begin{equation}
\mathbf{P}_{(r_0, \upsilon_0)}^\dagger(T_2 < \texttt{k}) \le \mathbf{P}^\dagger(J_1 < \texttt{k}) \le \int_0^{\kappa_{r_0, \upsilon_0}^D}{\rm d}s\bar\alpha{\rm e}^{-\underline{\alpha}s} \le C_3 \kappa_{r_0, \upsilon_0}^D.
\label{LB3}
\end{equation}
Combining this with \eqref{LB2} yields (A1) with $\nu$ as Lebesgue measure on $D \times V$ and $n_0 = 2$.
\end{proof}

We now prove~\ref{itm:A2}. Again, we use a similar method to the one used in~\cite{SNTE}, however, we state the proof in full to illustrate where the differences occur. 

\begin{proof}[Proof of A2]
Let $n \ge 7$ and note that $T_n - J_7  \ge T_n - T_7 $. This and the strong Markov property imply
\begin{align}
\mathbf{P}_{(r, \upsilon)}(n < \texttt{k}) &\le \mathbf{E}_{(r, \upsilon)}^\dagger\left[ \mathbf{P}_{(R_{J_7}, \Upsilon_{J_7})}\left(n - 7 < \texttt{k} \right)\right]\notag \\
&\le C'\int_D\int_V \mathbf{P}_{(z, w)}\left(n - 7 < \texttt{k}  \right){\rm d}z{\rm d}w, \label{ineq1}
\end{align}
where we have used Lemma~\ref{jumps} to obtain the final inequality.

\bigskip

Now suppose $n \ge 1$. Recalling the measure $\nu$ from~\ref{itm:A1}, another application of Lemma~\ref{jumps} gives
\begin{align}
\mathbf{P}_\nu(n < \texttt{k}) &= \mathbf{E}_\nu^\dagger\left[\mathbf{1}_{(J_1 < \texttt{k})}\mathbf{P}_{(R_{J_1}, \Upsilon_{J_1})}(n < \texttt{k}) \right]\notag\\
&\ge c'\int_D\int_V\mathbf{P}_{(z, w)}(n < \texttt{k}){\rm d}z{\rm d}w. \label{ineq2}
\end{align}

\bigskip

Then, for $n \ge 8$, combining ~\eqref{ineq1} and~\eqref{ineq2} yields
\begin{equation}
\mathbf{P}_{(r, \upsilon)}(n < \texttt{k}) \le \frac{C'}{c'}\mathbf{P}_\nu\left(n - 7 < \texttt{k}\right).
\label{ineq3}
\end{equation}
Now recalling $n_0$ from~\ref{itm:A1}, it follows from~\ref{itm:A1} that
\begin{equation}
\mathbf{P}_\nu^\dagger((R_{T_{n_0}}, \Upsilon_{T_{n_0}}) \in \cdot) \ge c_1\mathbf{P}_\nu(n_0 < \texttt{k} )\nu(\cdot).
\label{ineq4}
\end{equation}
Again, due to assumptions (H1) and (H3)$^*$,
\begin{equation}
\mathbf{P}_\nu(n_0 < \texttt{k}) \ge \int_{D \times V}{\bf P}_{(r, \upsilon)}^\dagger(T_{n_0} = J_{n_0}, n_0 < \texttt{k}) \nu({\rm d}r, {\rm d}\upsilon) \ge K,
\label{killprob}
\end{equation}
for some constant $K > 0$. Then, for $n \ge 8$, due to~\eqref{ineq4} and~\eqref{killprob},
\begin{align}
\mathbf{P}_\nu\left(n - 7 + n_0  < \texttt{k}\right) &= \mathbf{E}_\nu\left[\mathbf{1}_{(n_0 < \texttt{k})}\mathbf{P}_{(R_{T_{n_0}}, \Upsilon_{T_{n_0}})}\left( n - 7 < \texttt{k}\right) \right]\notag \\
& \ge Kc_1\mathbf{P}_\nu\left(n - 7 < \texttt{k}  \right). \label{ineq5}
\end{align}
Finally, noting that for $n \ge 1$ we have $n - 7 + 4n_0 \ge n$, so that
\[
\mathbf{P}_\nu(n < \texttt{k}) \ge \mathbf{P}_\nu\left(n - 7 + 4n_0 < \texttt{k}\right),
\]
and applying~\eqref{ineq5} four times implies
\begin{equation}
\mathbf{P}_\nu(n < \texttt{k}) \ge (Kc_1)^4\mathbf{P}_\nu\left(n - 7 < \texttt{k} \right).
\end{equation}
Combining this with~\eqref{ineq3} yields the result.
\end{proof}
\section{Concluding remarks}\label{discussion}

We complete this paper with a number of remarks that reflect on the main theorem here and previous work in \cite{MultiNTE, SNTE, SNTE-II, MCNTE}.
\subsection{$\lambda$-, $k$- and $c$-eigenvalue problems}
There is a third eigenvalue problem associated with the NTE: find $(c, \varphi_c)$ such that
\begin{align*}
\mathcal{T}\varphi_c + \frac{1}{c}\left(\mathcal{S}+ \mathcal{F}\right)\varphi_c = 0.
\end{align*}
The associated mild form of this eigenvalue problem is
\begin{equation}
\texttt{S}_t[\varphi_c](r, \upsilon) + \frac{1}{c}\int_0^t \texttt{S}_s[(\mathcal{S} + \mathcal{F})\varphi_c](r, \upsilon){\rm d}s = \varphi_c (r, \upsilon),
\label{cmild}
\end{equation}
where 
\[
\texttt{S}_t[g](r, \upsilon) = {\rm e}^{-\int_0^t\sigma(r + \upsilon s, \upsilon){\rm d}s}g(r + \upsilon t, \upsilon)\mathbf{1}_{( t <\kappa_{r,\upsilon}^D)}.
\]
By considering the semigroup $\Pi_n[g](r, \upsilon) = \mathbb{E}_{\delta_{(r, \upsilon)}}[\langle g, \mathbb{X}_n\rangle]$, where $\mathbb{X}_n$ is the neutron population at the $n^{th}$ collision (either a scatter or a fission), almost identical proofs to those given in the previous sections yield the existence of the $(c, \varphi_c)$, both in the classical sense and the probabilistic one.

\bigskip
 
In this case, the eigenvalue $c$ can be interpreted as the ratio between neutron production (from both scattering and fission) and neutron loss (due to absorption and leakage). Alternatively, it can be seen as the number of secondary neutrons per collision, rather than only collisions due to fission events. 

\subsection{Martingale convergence and strong law of large numbers}
In a similar fashion to~\cite{SNTE}, Theorem~\ref{CVtheorem} implies that
\[
\mathcal{W}_n \coloneqq k^{-n}\frac{\langle \varphi, \mathcal{X}_n\rangle}{\langle \varphi, \mu\rangle},
\]
is a non-negative mean one martingale under $\mathbb{P}_{\delta_{(r, \upsilon)}}$. One could then show that $(\mathcal{W}_n)_{n \ge 0}$ converges in $L^2(\mathbb{P})$ in the supercritical case, and otherwise has a degenerate limit.

\bigskip

One could also reconstruct the arguments presented in~\cite{SNTE-II} to characterise the growth in the supercritical regime to obtain a strong law of large numbers:
\[
\lim_{n\to\infty} k^{-n}\frac{\langle g, \mathcal{X}_n\rangle}{\langle\varphi, \mu\rangle}=  \langle g,\tilde{\varphi}\rangle \mathcal{W}_\infty,
\]
where $\mathcal{W}_\infty$ is the limit of the martingale $(\mathcal{W}_n)_{n \ge 0}$.

\bigskip

We leave these arguments as an exercise to the reader to avoid unnecessary repetition.

\subsection{Monte-Carlo considerations}
We end this paper with a discussion of the existing Monte Carlo methods for calculating $k_{\texttt{eff}}$ and the associated eigenfunctions, and how we may use the semigroup approach to propose comparable algorithms, similar in style to those presented in~\cite{MCNTE}.

\bigskip

Due to the interpretation of the eigenvalue $k_{\texttt{eff}}$, most of the existing methods in the numerical analysis and engineering literature are based on iterative methods. For example, several algorithms are given in~\cite{SG} that demonstrate how to calculate $k_{\texttt{eff}}$ and $\varphi$. The main idea is to start with a set of $N$ neutrons, distributed in $D\times V$ according to some function  $\varphi^{(0)}$ that serves as an initial guess\footnote{In practice, this is usually either the uniform distribution, or the solution to a diffusion approximation of the eigenvalue problem.} at $\varphi$. The system of neutrons then evolves until the first generation of fission events. Letting $\hat{\varphi}^{(1)}$ be the distribution of these first generation neutrons, the first approximation, $\varphi^{(1)}$, of the eigenfunction $\varphi$ is then obtained by normalising\footnote{This is usually done by either setting $\varphi^{(1)} = \widetilde{\varphi}^{(1)}/\Vert \widetilde{\varphi}^{(1)}\Vert$ or by sampling $N$ neutrons according to $\widetilde{\varphi}^{(1)}$} $\hat{\varphi}^{(1)}$. At the same time, the eigenvalue $k_{\texttt{eff}}$ is approximated by 
\[
k^{(1)} = \frac{\langle\mathbf{1}, \mathcal{F}\varphi^{(1)}\rangle}{\langle \mathbf{1}, (\mathcal{T} + \mathcal{S})\varphi^{(1)}\rangle},
\]
which corresponds to the ratio of source neutrons for generation $2$ to the number of paths simulated in generation $1$. The process is then repeated using $\varphi^{(1)}$ as the initial distribution of neutrons, in order to obtain $\varphi^{(2)}$ and $k^{(2)}$, and so on.


However, some of the methods presented in the literature lead to bias and correlations between the neutrons in successive fission generations. To overcome this problem, the notion of superhistory powering was introduced in~\cite{SHP}. This idea is based on letting the initial set of neutrons evolve for some number, $L$, of generations until the estimates for $k_{\texttt{eff}}$ and $\varphi$ are computed. It is usual in industry to set $L = 10$.

\bigskip

As we have shown in the previous sections, solving~\eqref{keval2} is equivalent to look for the leading eigentriple $(k_*, \varphi, \tilde\varphi)$ of the semigroup $\Psi_n$. Heuristically, this pertains to finding functions $\varphi$ and $\tilde\varphi$ that describe where neutron production (due to fission events) is most prominent, and a parameter $k_*$ that describes the average growth of the number of neutrons in the system. We may use the asymptotics~\eqref{spectralexpsgp} to inform Monte Carlo methods for the calculation of $k_*$, $\varphi$ and $\tilde\varphi$. Indeed, we have
\[
k_* = \lim_{n \to \infty}\frac{1}{n}\log\Psi_n[\mathbf{1}](r, \upsilon),
\]
where $\mathbf{1}$ is the constant function with value one. Here, as an expectation, $\Psi_n[\mathbf{1}]$ can be approximated by  Monte-Carlo simulation. 

In order to calculate the eigenfunction, one can manipulate the following asymptotic.
\[
\langle \tilde\varphi, g\rangle \varphi(r, \upsilon) = \lim_{n \to \infty}\mathbb{E}_{\delta_{(r, \upsilon)}}\left[\frac{1}{n}\sum_{m = 1}^n k_*^{-m}\langle \mathcal{X}_m, g\rangle \right].
\]
Varying the test function $g$, while keeping $(r, \upsilon)$ fixed allows us to obtain estimates for $\tilde\varphi$, whereas varying the initial configuration $(r, \upsilon)$ and keeping the test function fixed allows us to estimate $\varphi$.

Once again, the expectation can be replaced by a Monte-Carlo approximation. 
\bigskip

We refer the reader to~\cite{MCNTE} for a more in-depth discussion of Monte Carlo algorithms based on the above asymptotics, as well as a complexity analysis of their methods. Although the algorithms and efficiency results given in~\cite{MCNTE} are for time-eigenvalues, cf. \eqref{lambda}, it is straightforward to see how they may be adapted to fit the current situation (as well as their complexity). Of course, problems such as burn-in and inefficiencies that were encountered in~\cite{MCNTE} will still be present in the stationary case. We hope to carry out more formal work on this in the future. 

\section*{Acknowledgements}  We are indebted to Geoff Dobson and Paul Smith from the ANSWERS modelling group at Wood for the extensive discussions as well as hosting at their offices in Dorchester.  AEK and AMGC are supported by EPSRC grant EP/P009220/1. ELH is supported by a PhD scholarship with funding from industrial partner Wood.

\bibliography{references}{}
\bibliographystyle{plain}

\end{document}